\providecommand{\U}[1]{\protect\rule{.1in}{.1in}}
\providecommand{\U}[1]{\protect\rule{.1in}{.1in}}
\newcommand{\BE}{\begin{equation}}
\newcommand{\EE}{\end{equation}}
\newcommand{\zd}{\,\mathrm{d}}
\numberwithin{equation}{section}
\newtheorem{proposition}{Proposition}[section]
\newtheorem{theorem}[proposition]{Theorem}
\newtheorem{lemma}[proposition]{Lemma}
\newtheorem{example}[proposition]{Example}
\def\dfrac{\displaystyle\frac}
\begin{document}

 \title{{\bf A high-order and fast scheme with variable time steps for the time-fractional Black-Scholes equation}}
 
  \author{Kerui Song\thanks{Email: 925281694@qq.com. School of Economic Mathematics, Southwestern University of Finance and Economics, Chengdu, China.}
 \and Pin Lyu\thanks{Corresponding author.
 Email: plyu@swufe.edu.cn. School of Economic Mathematics, Southwestern University of Finance and Economics, Chengdu, China. This author is supported by the National Natural Science Foundation of China (12101510) and the Fundamental Research Funds for the Central Universities (JBK2102010).}
}
\date{}
 \maketitle\normalsize

\begin{abstract}
In this paper, a high-order and fast numerical method is investigated for the time-fractional Black-Scholes equation. In order to deal with the typical weak initial singularities of the solution, we construct a finite difference scheme with variable time steps, where the fractional derivative is approximated by the nonuniform Alikhanov formula  and the sum-of-exponentials (SOE) technique. In the spatial direction, an average approximation with fourth-order accuracy is employed. The stability and the convergence with second-order in time and fourth-order in space of the proposed scheme are religiously derived by the energy method. Numerical examples are given to demonstrate the theoretical statement.
\end{abstract}
 {{\bf Key words:}  time-fractional Black-Scholes equation; high-order method; variable time steps, fast algorithm}

\section{Introduction}
In recent years, the option theory has been widely used in financial and economic fields, so the study of option pricing becomes more important in both theoretical significance and practical application. The Black-Scholes model, a second-order parabolic partial differential equation related to stock price and time, is used for pricing European or American put and call options on stock \cite{Black and Scholes}. 

With the proposals of the fractional partial differential equation about stochastic model and financial theory, a growing number of scholars began to study fractional option pricing model and made great progress. Wyss \cite{Wyss} considered the pricing of option derivatives under a time-fractional Black-Scholes equation preliminarily by replacing the time first-order derivative by a fractional derivative of order $\alpha~(0<\alpha\leq 1)$, and derived a closed-form solution for European vanilla options. Cartea and del-Castillo-Negrete \cite{Castillo-Negrete} displayed that some particular L\'{e}vy processes satisfy a fractional partial differential equation,  and employed numerical methods to solve the related fractional models in order to  price exotic options, in particular barrier options.  Jumarie \cite{Jumarie1, Jumarie2} applied the fractional Taylor formula to remove the effects of the non-zero initial value of the function. Under the It\^{o} lemma of fractional order illustrated in the special case of a fractional growth with white noise, they derived the time and space fractional Black-Scholes equations. By assuming that the stock price dynamics follows a fractional It\^{o} process, Liang et al. \cite{Liang1,Liang2} proposed a bi-fractional Black-Merton-Scholes model of option pricing. A slightly simplified version based on Liang et al.'s model \cite{Liang2} was then discussed in Chen et al. \cite{Chen}, they assumed that the underlying asset price is consistent with the classical Brownian motion, and the spatial-fractional derivative in the governing equation disappears, but the time-fractional derivative remains.

Since the analytical solution of the fractional differential equations are always hard to find, it is necessary to study efficient numerical methods for the related problems.  In this paper, we will discuss a high-order finite difference method for the time-factional Black-Scholes equation \cite{Chen}:
\begin{equation}\label{eq1}
\begin{array}{l} \vspace{2mm}
\displaystyle {\frac{\partial^\alpha C}{\partial \zeta^\alpha}}+\frac{1}{2}{\varrho^{2}}S^{2}\frac{\partial^{2}C}{\partial S^{2}}+(r-D)S\frac{\partial C}{\partial S}-rC=0, \quad (S,\tau)\in \tilde{\Omega}\times [0,T),\\
C(S,T)=R(S), \qquad S_l<S<S_r,\\
C(S_l,\zeta)=P(\zeta), ~C(S_r,\zeta)=Q(\zeta), \qquad {\zeta}\in [0,T),
\end{array}
\end{equation}
where $C(S,\tau)$ is the time-$\tau$ price of a European-style double barrier option with the underlying $S$, $\tilde{\Omega} = (S_l, S_r)\subset\mathbb{R}^{+}$, $\zeta$ is the current time, $T$ is the expiry, $r$ is the risk-free interest rate, $D$ is the dividend yield and ${\varrho}$ is the volatility of the returns. The functions $P$ and $Q$ are the rebates paid when the corresponding barrier is hit, and $R$ is the payoff of the option. The time derivative in \eqref{eq1} is defined as
$$
\frac{\partial^\alpha C}{\partial \zeta^\alpha}=\int_{\zeta}^T{\omega_{1-\alpha}}(\eta-\zeta){\partial}_\eta C(S,\eta)d\eta  \quad 0<{\alpha}<1,
$$
where the kernel ${\omega_{\beta}}(t):={t^{\beta-1}}/{\Gamma(\beta)}$,  $t>0$.

 We notice that (see also \cite{Staelen-CMA}) by taking the auxiliary variables: $x=\ln S$, $t=T-\zeta$ and the function  $w(x,t)=C(e^x,T-t)$, one has
\begin{align}\label{Caputofrac}
-\frac{\partial^\alpha C}{\partial \zeta^\alpha}=\int_{0}^t{\omega_{1-\alpha}}(t-s){\partial}_sw(x,s)ds= \mathcal{D}_t^{\alpha}w,
\end{align}
where $\mathcal{D}_t^{\alpha}$ represents the Caputo derivative of order $\alpha\in(0,1)$. Then, the problem \eqref{eq1} can be transformed to the following equations with constant coefficients:
\begin{equation}\label{Ordianary}
\begin{array}{l} \vspace{2mm}
\displaystyle \mathcal{D}_t^{\alpha}w-a\frac{\partial^{2}w}{\partial x^{2}}-b\frac{\partial w}{\partial x}+cw=0, \quad (x,t)\in \Omega\times (0,T],\\
w(x,0)= r(x), \qquad x\in\Omega,\\
w(x_l,t)=p(t), ~w(x_r,t)=q(t),  \qquad t\in (0,T],
\end{array}
\end{equation}
where $a=\frac{1}{2}\varrho^2$, $b=r-a-D$, $c=r$ and $\Omega=(x_l,x_r)$.

Moreover, denote $u(x,t):=w(x,t)-z(x,t)$, where
$$\displaystyle z(x,t):=\frac{q(t)-p(t)}{x_r-x_l}(x-x_l)+p(t).$$
It is easy to see that the problem \eqref{Ordianary} is equivalent to the next equations with homogeneous boundary conditions:
\begin{equation}\label{GoverningEq}
\begin{array}{l} \vspace{2mm}
\displaystyle
 \mathcal{D}_t^{\alpha}u= a\frac{\partial^{2}u}{\partial x^{2}}+b\frac{\partial u}{\partial x}-cu+f(x,t), \quad (x,t)\in \Omega\times (0,T],\\
u(x,0)= \varphi(x), \qquad x\in\Omega,\\
u(x,t)=0,  \qquad (x,t)\in\partial\Omega\times\in (0,T],
\end{array}
\end{equation}
where
\begin{align*}
&f(x,t)=b\frac{q(t)-p(t)}{x_r-x_l}-cz-\mathcal{D}_t^{\alpha}z,\\
& \varphi(x)=r(x)-\frac{q(0)-p(0)}{x_r-x_l}(x-x_l)-p(0).
\end{align*}

In recent years, several numerical methods for solving fractional Black-Scholes model have been developed.  In \cite{Cen-CMA2018}, a difference scheme on nonuniform time grids is proposed for an equivalent integral-differential equation of the problem \eqref{eq1}, but it is only first-order convergent in time. For the problem \eqref{GoverningEq}, Zhang et al. \cite{Zhang-Liu} discussed a discrete implicit numerical scheme which has the temporal $(2-\alpha)$-order and spatial second-order convergence. Roul \cite{Roul-AML} studied a finite difference method with the convergence of ($2-\alpha$)-order in time and fourth-order in space. De Staelen et al. \cite{Staelen-CMA} investigated an implicit numerical scheme with a temporal accuracy of ($2-\alpha$)-order and spatial accuracy of fourth-order by using the Fourier analysis method. It should be noticed that all of the above numerical methods are based on the analytical solution is smooth enough in the time direction.
However, the solution of time-fractional differential equations generally exhibits weak singularities near the initial time, which such that most of the classical numerical methods based on smooth assumptions are difficult to achieve the high-order convergence in the general situations, one may refer to \cite{Stynes-SIAM2017,Jin-IMA2016} for the discussion on the regularity of the solution of time-fractional diffusion equations and the restrictions of some classical approximations based on sufficient smooth solutions. 

To deal with the weak singularities of the solutions, a natural and efficient way is implementing numerical methods with variable step sizes (the mesh will be nonuniform), that is concentrating more mesh points around the (weak) singular points to catch the rapid variation of the solution and use large steps while the solution changes slowly. Numerical methods with variable time step sizes are found to be very efficient and fairly popular in recent years to solve the weak initial singularities of the time-fractional partial differential equations \cite{ChenStynesJSC2019,KoptevaMC2019,Sharperror,Liao-2order,second-fast,LiaoYanZhang2018,LyuVong2020diffu-wave,LyuVong2021wave-variable,Stynes-SIAM2017,WangANM2021}.
In view of the practical advantage of the nonuniform mesh technique, we will discuss the Alikhanov formula with variable steps to develop an efficient finite difference scheme with second-order temporal accuracy for the time-fractional Black-Scholes equation with weak singular solutions, and the sum-of-exponentials (SOE) technique \cite{Fast-evaluation} will also be utilized at the same time to the discrete Caputo derivative to save the computation costs. Moreover, a high-order average approximation will be employed to approximate the space derivatives to such that the proposed fully discrete scheme is fourth-order accuracy in the spatial direction.
The stability of the proposed scheme will be established according to the analysis framework developed in \cite{Adiscrete,Liao-2order} and some matrix analysis techniques. Based on the following regularity assumptions on the exact solution $u$ (for $0<t\leq T$):
\begin{align}\label{regularity1}
 &\left\|\frac{\partial^{k+l} u}{\partial t^k\partial x^l}\right\|_{L^\infty}\leq C(1+t^{\sigma-l}),\quad \mbox{for}\quad k=0,1,2,3,\quad l=0,1,2;\\\label{regularity2}
  &\left\|\frac{\partial^{m} u}{\partial x^m}\right\|_{L^\infty}\leq C, \quad \mbox{for}\quad m=3,4,5,6,
\end{align}
where $\sigma\in (0,1)\cup (1,2)$ is a regularity parameter, and under weak mesh restrictions,
we can show that the proposed nonuniform scheme is unconditionally convergent with second-order accuracy in time and fourth-order accuracy in space.

The rest of the paper is organized as follows. In Section \ref{Space-time-approx}, we introduce a spatial fourth-order approximation for the governing problem, and show the fast nonuniform Alikhanov formula and derive some necessary properties of the discrete coefficients. In Section \ref{NumericalScheme}, based on the fast  nonuniform Alikhanov formula and the spatial forth-order approximation, we construct an efficient nonuniform finite difference scheme for the time-fractional Black-Scholes equation. The unconditional stability and the convergence of second-order in time and fourth-order in space for the proposed scheme are well displayed by energy method. Numerical examples are provided in Section \ref{numerical} to demonstrate the theoretical statement. A brief conclusion is followed in Section \ref{conclusion}.

\section{The high-order and nonuniform approximations}\label{Space-time-approx}
\subsection{Spatial high-order approximation}\label{Spatial-high-order}

Some notations are needed. For a positive integer $M$, the spatial step size $h=(x_r-x_l)/M$, the discrete grid ${\Omega}_h:=\{x_l+ih~|~1\leq i\leq M-1\}$ and $\bar\Omega_h:={\Omega}_h\cup\partial\Omega$. Denote the space of grid functions $\mathcal{V}_h:= \{v_i~|~v_i~\mbox{vanishes on}~ \partial\Omega_h, ~0\leq i\leq M\}$. For two grid functions $v_i, w_i \in \mathcal{V}_h$, the inner product is denoted as $\displaystyle \langle v,w\rangle := h\sum_{i=1}^{M-1}v_iw_i$, and the discrete $L^2$ norm is $\|v\| := \sqrt{\langle v,v\rangle}$. Define spatial central difference operators $\delta_x^2 v_i:=(v_{i+1}-2v_i+v_{i-1})/h^2$ and $\delta_{\hat x} v_i:=(v_{i+1}-v_{i-1})/(2h)$.

In order to obtain a spatial high accuracy numerical scheme, we will utilize a fourth-order approximation which is derived in \cite{Staelen-CMA} to discretize the space derivatives of the time-fractional Black-Scholes equations \eqref{GoverningEq}, we review it briefly in the following.

Applying the Taylor formula at the grid points $x_i~(1\leq i\leq M-1)$, and based on the assumption \eqref{regularity2}, we have
\begin{align}\label{Taylor-expansion1}
& \frac{\partial u(x_i, t)}{\partial x}=\delta_{\hat x}u(x_{i}, t)-\frac{h^2}{6}\frac{\partial^{3}u(x_i, t)}{\partial x^{3}}+\mathcal{O}(h^4),\\\label{Taylor-expansion2}
& \frac{\partial^{2}u(x_i, t)}{\partial x^{2}}=\delta_x^2 u(x_{i}, t)-\frac{h^2}{12}\frac{\partial^{4}u(x_i, t)}{\partial x^{4}}+\mathcal{O}(h^4).
\end{align}
Denote $g(x,t):=\mathcal{D}_t^{\alpha}u(x,t)+cu(x,t)-f(x,t)$.
It follows from  the first equation in \eqref{GoverningEq}, and \eqref{Taylor-expansion1}--\eqref{Taylor-expansion2} that
\begin{align}\label{high-order1}
\displaystyle a\delta_x^2{u(x_i,t)}+b\delta_{\hat{x}}u(x_i,t)-\frac{h^2}{12}\left(a\frac{\partial^{4}u(x_i, t)}{\partial x^{4}}+2b\frac{\partial^{3}u(x_i, t)}{\partial x^{3}}\right)+\mathcal{O}(h^4)=g(x_i,t),
\end{align}
On the other hand, suppose $g(x,\cdot)\in {\cal C}^2(\Omega)$, the Taylor formula shows that
\begin{align}\label{Taylor-expansion3}
 \frac{\partial^{3}u(x_i, t)}{\partial x^{3}}&=\frac{1}{a}\left(\delta_{\hat{x}}g(x_i,t)-b\delta_x^2{u(x_i,t)}\right)+\mathcal{O}(h^2),\\\label{Taylor-expansion4}
 \frac{\partial^{4}u(x_i, t)}{\partial x^{4}}&=\frac{1}{a}\left(\delta_x^2{g(x_i,t)}-\frac{b}{a}\left(\delta_{\hat{x}}g(x_i,t)-b\delta_x^2{u(x_i,t)}\right)\right)+\mathcal{O}(h^2).
\end{align}
Substituting \eqref{Taylor-expansion3}--\eqref{Taylor-expansion4} in \eqref{high-order1}, one has
\begin{align}\label{high-order-app}
 \frac{h^2}{12}\left(\delta_x^2{g(x_i,t)}
+\frac{b}{a}\delta_{\hat{x}}g(x_i,t)\right)+g(x_i,t)=\left(a+\frac{h^2b^2}{12a}\right)\delta_x^2{u(x_i,t)}+b\delta_{\hat{x}}u(x_i,t)+\mathcal{O}(h^4).
\end{align}
Thus, from \eqref{high-order-app}, we have a high-order operator ${\cal H}:=\dfrac{h^2}{12}\left(\delta_x^2+\dfrac{b}{a}\delta_{\hat{x}}\right)+1$ to implement a spatial fourth-order accurate approximation.

\subsection{Fast nonuniform Alikhanov formula}

Our numerical method will be implemented on possible nonuniform time partitions: $0=t_0<t_1<t_2<{\cdots}<t_N=T$, where $N$ is a positive integer. Denote a fractional time level $t_{n-{\theta}}:={\theta}t_{n-1}+(1-\theta)t_n$ for an off-set parameter ${\theta}={\alpha}/{2}$, and take ${\tau}_k:=t_k-t_{k-1}$ ($1\leq k \leq N$) as the $k$th time-step size, and ${\tau}:=\mathop{\mbox{max}}\limits_{1\leq k \leq N-1}{\tau}_k$ being the maximum step size. Besides, the local step-size ratios are defined as
$${\rho}_k:=\frac{\tau_k}{\tau_{k+1}} ~\mbox{ for }~  1\leq k \leq N-1, \quad \mbox{and}  \quad {\rho}:=\max\limits_{1\leq k \leq N-1}{\rho}_k.$$
The numerical analysis of our proposed scheme will be based on the following weak assumptions on the temporal mesh:
\begin{itemize}
\item[{\bf M1.}] The maximum time-step ratio is ${\rho}=7/4$.
\item[{\bf M2.}]  There is a constant $C_\gamma>0$ such that $\tau_k\leq C_\gamma\tau\mbox{min}\{1,t_k^{1-1/\gamma}\}$ for $1\leq k\leq N$, with $t_k\leq C_\gamma t_{k-1}$ and $\tau_k/t_k\leq C_\gamma\tau_{k-1}/t_{k-1}$ for $2\leq k\leq N$.
\end{itemize}

We next introduce the time approximation for the Caputo derivative. For any time sequence $(v^{k})_{k=0}^N$, define the backward difference ${\nabla}_\tau v^k:=v^k-v^{k-1}$ and the interpolated value $v^{n-\theta}:={\theta}v^{n-1}+(1-\theta)v^n$. Denoting $\Pi_{1,n}v$ the linear interpolation of a function $v$ with respect to the nodes $t_{k-1}$ and $t_k$, and $\Pi_{2,n}v$ the quadratic interpolation of a function $v$ with respect to the nodes $t_{k-1}$, $t_k$  and $t_{k+1}$.
To obtain a second-order scheme, we apply the Alikhanov formula on possible nonuniform meshes \cite{Liao-2order} to approximate the Caputo derivative. Meanwhile, the SOE technique is employed to result a nonuniform and fast Alikhanov formula in order to reduce the computational costs.



 First of all, we review the SOE approximation (see also \cite[Theorem 2.5]{Fast-evaluation}  or \cite[Lemma 5.1]{second-fast}) which is designed for the kernel function $\omega_{1-\alpha}(t)$ on the interval $[\Delta t, T]$:
\begin{lemma}\label{FA}
	For the given $\alpha\in (0,1)$, an absolute tolerance error $\epsilon\ll1$, a cut-off time $\Delta t>0$ and a finial time $T$, there exists a
	positive integer $N_q$, positive quadrature nodes $s^l$ and corresponding positive weights $\varpi^l (1\leq l\leq N_q)$ such that
	\begin{align*}
	\left| \omega_{1-\alpha}(t)-\sum_{l=1}^{N_q}\varpi^l e^{-s^lt}\right| \leq \epsilon, \quad \forall t\in [\Delta t, T].
	\end{align*}
\end{lemma}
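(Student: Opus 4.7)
The plan is to prove the lemma by combining the standard integral representation of the power kernel with a truncation-plus-Gaussian-quadrature argument, following the strategy in \cite{Fast-evaluation}. All quantities constructed along the way will preserve positivity, which is what makes the final weights $\varpi^l$ positive.

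First I would rewrite $\omega_{1-\alpha}(t)$ through the Euler-type integral identity
\begin{equation*}
\omega_{1-\alpha}(t)=\frac{t^{-\alpha}}{\Gamma(1-\alpha)}=\frac{1}{\Gamma(1-\alpha)\Gamma(\alpha)}\int_{0}^{\infty}s^{\alpha-1}e^{-st}\,\zd s,
\end{equation*}
valid for $t>0$ and $\alpha\in(0,1)$. The goal is then to approximate this integral by a positive-weight quadrature rule that is uniformly accurate for $t\in[\Delta t,T]$. I would next perform a three-part truncation: choose cutoffs $0<p<q<\infty$ and write the integral as the sum of pieces on $(0,p)$, $[p,q]$, and $(q,\infty)$. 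The near-zero tail is controlled by $\int_0^p s^{\alpha-1}e^{-st}\zd s\le p^\alpha/\alpha$, so it is made $\le \epsilon/3$ by taking $p$ small (this is uniform in $t$). The far tail on $(q,\infty)$ is controlled by pulling out $e^{-q\Delta t/2}$ and using $t\ge\Delta t$, so choosing $q\sim (1/\Delta t)\log(1/\epsilon)$ makes it $\le \epsilon/3$ uniformly on $[\Delta t,T]$.

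On the bounded interval $[p,q]$ I would partition dyadically, setting $p_j:=p\cdot 2^{j}$ for $j=0,1,\dots,J$ with $J\sim\log_2(q/p)$, and on each subinterval $[p_j,p_{j+1}]$ apply a Gauss--Legendre (or Gauss--Jacobi, to absorb the $s^{\alpha-1}$ weight) rule of some order $n_j$. Because $e^{-st}$ viewed as a function of $s$ is analytic in a strip whose width is essentially independent of the subinterval once the interval is rescaled, standard Gauss-quadrature error estimates give spectral (exponential in $n_j$) convergence; choosing $n_j=\mathcal{O}(\log(1/\epsilon))$ uniformly is therefore enough to bring the error per piece below $\epsilon/(3J)$. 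The union of the resulting nodes and weights, translated back to $s$-space, produces strictly positive $s^l$ and $\varpi^l$ with total count $N_q=\sum_j n_j = \mathcal{O}\bigl(\log(1/\epsilon)\log(T/\Delta t)\bigr)$. Absorbing the $1/[\Gamma(1-\alpha)\Gamma(\alpha)]$ factor into the weights (and keeping it positive) yields the stated bound.

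The main obstacle, and the only step requiring any real care, is the estimate of the Gauss-quadrature error on each dyadic block: one must show that the relevant analyticity strip for the rescaled integrand does not shrink as the block index $j$ or the parameter $t$ varies, so that the error bound is truly uniform in $t\in[\Delta t,T]$ and uniform across blocks. Once this uniformity is in hand, summing the three contributions (near-zero tail, far tail, quadrature error on $[p,q]$) gives at most $3\cdot(\epsilon/3)=\epsilon$, completing the proof. Everything else is bookkeeping: positivity of $\varpi^l$ and $s^l$ is automatic from Gauss-Legendre/Jacobi weights being positive and from working inside $(0,\infty)$, and $N_q$ is finite by construction.
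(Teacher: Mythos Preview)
The paper does not actually prove this lemma: it is stated as a review of the SOE approximation and attributed to \cite[Theorem 2.5]{Fast-evaluation} and \cite[Lemma 5.1]{second-fast}, with no proof given in the paper itself. Your sketch is the correct construction from \cite{Fast-evaluation} --- the Euler integral representation of $t^{-\alpha}$, truncation of the $s$-integral at both ends, dyadic partition of the remaining bounded interval, and Gauss quadrature on each block --- and the positivity of nodes and weights follows for the reasons you give. So there is no discrepancy in approach to discuss; you are supplying the argument that the paper merely cites.

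One small caution on the ``main obstacle'' you flag: the uniformity of the Gauss-quadrature error in $t$ and across dyadic blocks is handled in \cite{Fast-evaluation} by rescaling each block $[p_j,p_{j+1}]$ to a fixed reference interval and observing that the rescaled integrand $e^{-p_j(1+\xi)t}$ has an analyticity ellipse whose parameters depend only on the product $p_jt$, which is bounded above by $q T$ and below by $p\Delta t$; this is exactly the bookkeeping you anticipate, and it goes through. Your count $N_q=\mathcal{O}(\log(1/\epsilon)\log(T/\Delta t))$ is slightly optimistic compared to the bound stated in \cite{Fast-evaluation}, which also picks up a $\log\log(1/\epsilon)$ factor from the choice of $q$, but this does not affect the existence statement in the lemma.
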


Next, the Caputo fractional derivative at the time point $t_{n-\theta}$ will be divided into two parts: an integral over $[0, t_{n-1}]$ (the historical part) and an integral over $[t_{n-1}, t_{n-\theta}]$ (the local part). The local part will be approximated directly via a linear interpolation and  the historical part will be evaluated by the SOE approximation given in Lemma \ref{FA}, that is
\begin{align}\nonumber
\mathcal{D}_{t}^{\alpha}u(t_{n-\theta})&\approx \int_{t_{n-1}}^{t_{n-\theta}}\varpi'_n(s)(\Pi_{1,n}u)'(s)\zd s + \int_{0}^{t_{n-1}}{\sum_{l=1}^{N_q}\varpi^l e^{-s^l(t_{n-\theta}-s)}u'(s)\zd s}\\\label{fastCaputo}
&= a_0^{(n)}\nabla_\tau u^n + \sum_{l=1}^{N_q}\varpi^l \mathcal{Q}^l(t_{n-1}), \quad n\geq 1,
\end{align}
where ($1\leq k\leq n$)
\begin{align}\label{ank}
	&a_{n-k}^{(n)}:=\frac1{\tau_k}\int_{t_{k-1}}^{\min\{t_k,t_{n-\theta}\}}\omega_{1-\alpha}(t_{n-\theta}-s)\zd s ,\\\nonumber
	&\mathcal{Q}^l(t_0):=0, \qquad\mathcal{Q}^l(t_k):=\int_{0}^{t_k}{e^{-s^l(t_{k+1-\theta}-s)}u'(s)\zd s}.
\end{align}
The quantity $\mathcal{Q}^l(t_k)$ can be approximated by using the quadratic interpolation and a recursive formula, i.e.,
\begin{align}\nonumber
\mathcal{Q}^l(t_k)&\approx \int_{0}^{t_{k-1}}{e^{-s^l(t_{k+1-\theta}-s)}u'(s)\zd s} + \int_{t_{k-1}}^{t_{k}}{e^{-s^l(t_{k+1-\theta}-s)}(\Pi_{2,k}u)'(s)\zd s}\\\label{Qapp}
&= e^{-s^l(\theta\tau_k+(1-\theta)\tau_{k+1})}\mathcal{Q}^l(t_{k-1})+a^{(k,l)}\nabla_\tau u^k+b^{(k,l)}(\rho_k\nabla_\tau u^{k+1}-\nabla_\tau u^k),
\end{align}
in which the positive coefficients $a^{(k,l)}$ and $b^{(k,l)}$ are respectively determined by
$$a^{(k,l)}:=\dfrac{1}{\tau_k}\int_{t_{k-1}}^{t_{k}}e^{-s^l(t_{k+1-\theta}-s)}\zd s, \quad b^{(k,l)}:=\dfrac{1}{\tau_k}\int_{t_{k-1}}^{t_{k}}e^{-s^l(t_{k+1-\theta}-s)}\frac{2(s-t_{k-1/2})}{\tau_k(\tau_k+\tau_{k+1})}\zd s.$$
Thus, from \eqref{fastCaputo}--\eqref{Qapp}, the fast Alikhanov formula is presented as
\begin{align}\label{fastAlik}
(\mathcal{D}_{\tau}^{\alpha}u)^{n-\theta} = a_0^{(n)}\nabla_\tau u^n + \sum_{l=1}^{N_q}\varpi^l \mathcal{Q}^l(t_{n-1}), \quad n\geq 1.
\end{align}
It can be observed that the average storage of the approximation \eqref{fastAlik} is ${\cal O}(N_{\exp})$ instead of ${\cal O}(N)$, where the later one is generated from classical Alikhanov approximation, while computing the discrete Caputo derivative at the terminal point $t_N$. Thus the total computational cost of the corresponding numerical scheme with the SOE approximation will be far less than that of the standard schemes with classical Alikhanov approximation while $N$ is large.

One may notice that the discrete formula \eqref{fastAlik} has the following alternative form
\begin{align}\nonumber
(\mathcal{D}_{\tau}^{\alpha}u)^{n-\theta}&= \int_{t_{n-1}}^{t_{n-\theta}}\varpi'_n(s)(\Pi_{1,n}u)'(s)\zd s + \sum_{k=1}^{n-1}{\int_{t_{k-1}}^{t_{k}}{\sum_{l=1}^{N_q}\varpi^l e^{-s^l(t_{n-\theta}-s)}(\Pi_{2,k}u)'(s)\zd s}}\\\label{alternative}
&= a_0^{(n)}\nabla_\tau u^n + \sum_{k=1}^{n-1}\sum_{l=1}^{N_q}\varpi^l\left(c^{(k,l)}\nabla_\tau u^k+d^{(k,l)}(\rho_k\nabla_\tau u^{k+1}-\nabla_\tau u^k)\right),
\end{align}
where the discrete coefficients $c^{(k,l)}$ and $d^{(k,l)}$ are defined by
\begin{align}\label{ck}
c^{(k,l)}&:=\dfrac{1}{\tau_k}\int_{t_{k-1}}^{t_{k}}e^{-s^l(t_{n-\theta}-s)}\zd s, \\\label{dk} 
d^{(k,l)}&:=\int_{t_{k-1}}^{t_{k}}e^{-s^l(t_{n-\theta}-s)}\frac{2(s-t_{k-1/2})}{\tau_k(\tau_k+\tau_{k+1})}\zd s.
\end{align}
 Rearranging the terms in \eqref{alternative}, we obtain the compact form of \eqref{alternative}:
 \begin{align}\nonumber
	(\mathcal{D}_{\tau}^{\alpha}u)^{n-\theta} ={\sum_{k=1}^n} A_{n-k}^{(n)}{\nabla}_{\tau}u^k, \quad n\geq 1,
\end{align}
where the discrete convolution kernel ${A}_{n-k}^{(n)}$ are defined as follows: ${A}_0^{(1)} := a_0^{(1)}$ if n = 1 and, for $n\geq 2$,
\begin{align}\label{An-k}
{A}_{n-k}^{(n)}:=\left\{\begin{array}{ll}
\displaystyle a_0^{(n)}+\sum_{l=1}^{N_q}\varpi^l\rho_{n-1}d^{(n-1,l)},\quad & k=n,\\
\displaystyle\sum_{l=1}^{N_q}\varpi^l\left( \rho_{k-1}d^{(k-1,l)}+c^{(k,l)}-d^{(k,l)}\right) , & 2\leq k\leq n-1,\\
\displaystyle\sum_{l=1}^{N_q}\varpi^l(c^{(1,l)}-d^{(1,l)}),  & k=1.
\end{array}\right.
\end{align}
To analyze the proposed numerical scheme later, we need to show that above discrete convolution kernel $A_{n-k}^{(n)}$ fulfill two basic properties \cite{Adiscrete}, i.e.,\\
$\mathbf{A1.}$ There is a constant $\pi_A>0$ such that
$$ A_{n-k}^{(n)}\geq \frac{1}{\pi_A\tau_k}\int_{t_{k-1}}^{t_k}\omega_{1-\alpha}(t_n-s)ds  \quad~\mbox{ for }~  1\leq k\leq n\leq N;$$
$\mathbf{A2.}$ The discrete kernels are positive and monotone, that is,
$$A_{0}^{(n)}\geq A_{1}^{(n)}\geq A_{2}^{(n)}\geq \cdots \geq A_{n-1}^{(n)}>0  \quad~\mbox{ for }~  1\leq k\leq n\leq N.$$
We first derive some properties of the discrete coefficients $c^{(k,l)}$ and $d^{(k,l)}$.
\begin{lemma}\cite[Lemma 2.1]{Liao-2order}\label{bnk}
	For any function $q\in C^2([t_{k-1},t_k])$,
	\begin{align*}
		\int_{t_{k-1}}^{t_k}{(s-t_{k-1/2})q'(s)\zd s} &= -\int_{t_{k-1}}^{t_k}{(\widetilde{\Pi_{1,k}}q)(s)\zd s}\\
		&= \frac{1}{2}\int_{t_{k-1}}^{t_k}{(s-t_{k-1})(t_k-s)q''(s)\zd s}.
	\end{align*}
\end{lemma}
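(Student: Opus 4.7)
The plan is to prove both equalities by a single integration by parts on the left-hand integral, using two different choices of antiderivative for the linear factor $s-t_{k-1/2}$.

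For the first equality, I would set $u=s-t_{k-1/2}$ and $\zd v=q'(s)\zd s$ and integrate by parts. Since $t_{k-1/2}=(t_{k-1}+t_k)/2$, one has $t_k-t_{k-1/2}=\tau_k/2=-(t_{k-1}-t_{k-1/2})$, so the boundary contribution collapses to $\tfrac{\tau_k}{2}\bigl(q(t_{k-1})+q(t_k)\bigr)$. This quantity is exactly $\int_{t_{k-1}}^{t_k}(\Pi_{1,k}q)(s)\zd s$, because the trapezoidal rule is exact on the linear interpolant. Subtracting the remainder $\int_{t_{k-1}}^{t_k}q(s)\zd s$ produced by the integration by parts then yields $-\int_{t_{k-1}}^{t_k}(q-\Pi_{1,k}q)(s)\zd s$, which matches the middle expression once $\widetilde{\Pi_{1,k}}q$ is read as the interpolation residual $q-\Pi_{1,k}q$.

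For the second equality, I would integrate by parts in the opposite direction, taking $u=q'(s)$ and $\zd v=(s-t_{k-1/2})\zd s$, and the key trick is to choose the antiderivative of $s-t_{k-1/2}$ that vanishes at both endpoints. The algebraic identity $\tfrac{1}{2}(s-t_{k-1/2})^2-\tfrac{\tau_k^2}{8}=\tfrac{1}{2}(s-t_{k-1})(s-t_k)$ singles out the choice $v(s):=-\tfrac{1}{2}(s-t_{k-1})(t_k-s)$, which indeed satisfies $v'(s)=s-t_{k-1/2}$ and $v(t_{k-1})=v(t_k)=0$. The boundary terms therefore drop, and what remains is precisely $\tfrac{1}{2}\int_{t_{k-1}}^{t_k}(s-t_{k-1})(t_k-s)q''(s)\zd s$, as required.

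There is no real obstacle in either step; the only bit of insight is spotting the centered antiderivative $v$ for the second identity, after which everything is mechanical. The hypothesis $q\in C^2([t_{k-1},t_k])$ is used solely to legitimize the second integration by parts and the resulting $q''$ term.
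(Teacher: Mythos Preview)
Your argument is correct. Both integrations by parts are carried out cleanly: the first one produces the trapezoidal value $\tfrac{\tau_k}{2}\bigl(q(t_{k-1})+q(t_k)\bigr)=\int_{t_{k-1}}^{t_k}(\Pi_{1,k}q)(s)\zd s$ and hence the interpolation residual, and in the second one your choice of the centered antiderivative $v(s)=-\tfrac{1}{2}(s-t_{k-1})(t_k-s)$ with $v'(s)=s-t_{k-1/2}$ and $v(t_{k-1})=v(t_k)=0$ is exactly the right device to kill the boundary terms and leave the $q''$ integral.

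There is nothing to compare against here: the paper does not supply a proof of this lemma but simply quotes it from \cite[Lemma~2.1]{Liao-2order}. Your write-up is a complete self-contained justification. One small remark: the symbol $\widetilde{\Pi_{1,k}}q$ is never defined in the present paper, so your reading of it as the interpolation remainder $q-\Pi_{1,k}q$ is the natural one and is consistent with how the lemma is used downstream (e.g., in deriving \eqref{dkl}); you might state that interpretation explicitly when you write it up.
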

Applying Lemma \ref{bnk}, the definition \eqref{dk} of $d^{(k,l)}$ gives
\begin{align}\nonumber
	d^{(k,l)} &= \int_{t_{k-1}}^{t_{k}}e^{-s^l(t_{n-\theta}-s)}\frac{2(s-t_{k-1/2})}{\tau_k(\tau_k+\tau_{k+1})}\zd s\\\label{dkl}
	&= s^l\int_{t_{k-1}}^{t_{k}}e^{-s^l(t_{n-\theta}-s)}\frac{(s-t_{k-1})(t_k-s)}{\tau_k(\tau_k+\tau_{k+1})}\zd s,\quad  1\leq k\leq n-1.
\end{align}
Since $0<(s-t_{k-1})(t_k-s)<\tau_k^2/4$ ~for~ $t_{k-1}<s<t_k$, we have
\begin{align}\nonumber
d^{(k,l)} &\leq s^l\int_{t_{k-1}}^{t_{k}}e^{-s^l(t_{n-\theta}-s)}\frac{\tau_k^2}{4\tau_k(\tau_k+\tau_{k+1})}\zd s\\\label{dchu}
&= \frac{s^l\rho_k}{4(1+\rho_k)}\int_{t_{k-1}}^{t_{k}}e^{-s^l(t_{n-\theta}-s)}\zd s,\quad  1\leq k\leq n-1.
\end{align}
For simplicity of presentation, we let
\begin{align}
I^{(k,l)} := s^l\int_{t_{k-1}}^{t_{k}}{\frac{t_k-s}{\tau_k}e^{-s^l(t_{n-\theta}-s)}\zd s}, ~ J^{(k,l)} := s^l\int_{t_{k-1}}^{t_{k}}{\frac{s-t_{k-1}}{\tau_k}e^{-s^l(t_{n-\theta}-s)}\zd s},\quad  1\leq k\leq n-1.
\end{align}
\begin{lemma}\label{IJ}
	For $1\leq k\leq n-1$, the positive coefficients $d^{(k,l)}$ in \eqref{dk}  satisfy\\
	(i) $I^{(k,l)}\geq \dfrac{1+\rho_k}{\rho_k}d^{(k,l)};$\qquad (ii) $J^{(k,l)}\geq \dfrac{2(1+\rho_k)}{\rho_k}d^{(k,l)};$\qquad (iii) $J^{(k+1,l)}\geq \dfrac{1}{\rho_k}J^{(k,l)}.$
\end{lemma}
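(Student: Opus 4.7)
The plan is to reduce each of the three inequalities to the nonnegativity of an explicit integral over $[t_{k-1},t_k]$ (possibly after rescaling) and then exploit the fact that the weight $w(s):=e^{-s^l(t_{n-\theta}-s)}$ is strictly increasing in $s$, since $s^l>0$.

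For (i), the identity $\tau_k+\tau_{k+1}=\tau_k(1+\rho_k)/\rho_k$ rewrites \eqref{dkl} as $\frac{1+\rho_k}{\rho_k}d^{(k,l)}=\frac{s^l}{\tau_k^2}\int_{t_{k-1}}^{t_k}(s-t_{k-1})(t_k-s)w(s)\zd s$. Subtracting this from $I^{(k,l)}$ and pulling out a factor $(t_k-s)$ leaves an integrand equal to $(t_k-s)^2 w(s)\geq 0$, so the inequality is immediate.

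For (iii), I would rescale to the unit interval by setting $s=t_{k-1}+v\tau_k$ in $J^{(k,l)}$ and $s=t_k+v\tau_{k+1}$ in $J^{(k+1,l)}$; using $\tau_k=\rho_k\tau_{k+1}$, the claim reduces to the pointwise bound
\[
e^{-s^l(t_{n-\theta}-t_k-v\tau_{k+1})}\geq e^{-s^l(t_{n-\theta}-t_{k-1}-v\tau_k)},\qquad v\in[0,1],
\]
which rearranges to $(1-v)\tau_k+v\tau_{k+1}\geq 0$ and thus holds trivially.

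The main obstacle is (ii). The same algebraic reduction, together with the identity $\tau_k-2(t_k-s)=2(s-t_{k-1/2})$, gives
\[
J^{(k,l)}-\frac{2(1+\rho_k)}{\rho_k}d^{(k,l)}=\frac{2s^l}{\tau_k^2}\int_{t_{k-1}}^{t_k}P(s)\,w(s)\zd s,\qquad P(s):=(s-t_{k-1})(s-t_{k-1/2}),
\]
but now the integrand changes sign at $s=t_{k-1/2}$, so nonnegativity is no longer automatic. My approach is to exploit the fact that $P(s)$ and $w(s)-w(t_{k-1/2})$ have matching signs (both $\leq 0$ on $[t_{k-1},t_{k-1/2}]$, both $\geq 0$ on $[t_{k-1/2},t_k]$), so their product is pointwise nonnegative. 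Writing $w(s)=[w(s)-w(t_{k-1/2})]+w(t_{k-1/2})$ then furnishes the lower bound $w(t_{k-1/2})\int_{t_{k-1}}^{t_k}P(s)\zd s$, and a direct polynomial calculation gives the value $\tau_k^3/12>0$ for the remaining integral, which settles (ii).
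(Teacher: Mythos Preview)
Your proof is correct. Part (i) is essentially the paper's argument written as a subtraction: the paper just inserts $s-t_{k-1}<\tau_k$ into \eqref{dkl}, and your factorisation $\tau_k-(s-t_{k-1})=t_k-s$ is the same inequality viewed the other way.

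Parts (ii) and (iii), however, take a genuinely different route from the paper. For (ii) the paper does not compare $J^{(k,l)}$ and $d^{(k,l)}$ directly; it first uses Lemma~\ref{bnk} (the integration-by-parts identity) to obtain $J^{(k,l)}>\tfrac{s^l}{2}\int_{t_{k-1}}^{t_k}w(s)\zd s$ and then invokes the crude bound \eqref{dchu} on $d^{(k,l)}$ to close the chain. Your argument bypasses both auxiliary estimates via a Chebyshev-type observation: $P(s)=(s-t_{k-1})(s-t_{k-1/2})$ and the increasing weight $w(s)-w(t_{k-1/2})$ share a sign change at $t_{k-1/2}$, so their product is pointwise nonnegative, and the residual constant term integrates to $\tau_k^3/12>0$. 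For (iii) the paper introduces the auxiliary function $G_k(z)$ and applies the Cauchy mean-value theorem to the ratio $G_{k+1}(1)/G_k(1)$; you instead rescale both integrals to $[0,1]$ and compare the integrands pointwise, reducing the claim to the trivial inequality $(1-v)\tau_k+v\tau_{k+1}\geq0$. Your arguments for (ii) and (iii) are more elementary and self-contained (no appeal to Lemma~\ref{bnk} or to a mean-value theorem), while the paper's approach reuses machinery already set up and, in (ii), yields the intermediate bound $J^{(k,l)}>\tfrac{s^l}{2}\int w$ that is not needed here but could be of independent use.
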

\begin{proof}
	The alternative definition \eqref{dkl} of $d^{(k,l)}$ gives the result (i) directly since $0<s-t_{k-1}<\tau_k$ for $s\in (t_{k-1},t_k)$. Since $e^{-s^l(t_{n-\theta}-s)}>0$ for $0<s<t_{n-\theta}$, we apply Lemma \ref{bnk} to find
	$$s^l\int_{t_{k-1}}^{t_{k}}{\left( \frac{s-t_{k-1}}{\tau_k}-\dfrac{1}{2}\right) e^{-s^l(t_{n-\theta}-s)}\zd s}=\dfrac{(s^l)^2}{2\tau_k}\int_{t_{k-1}}^{t_{k}}(s-t_{k-1})(t_k-s)e^{-s^l(t_{n-\theta}-s)}\zd s>0,$$
	and then $J^{(k,l)}>\frac{s^l}{2}\int_{t_{k-1}}^{t_{k}}{e^{-s^l(t_{n-\theta}-s)}\zd s}$ for $1\leq k\leq n-1$.  So the inequality (ii) follows immediately from \eqref{dchu}. We now introduce an auxiliary function
	$$G_k(z) := \dfrac{s^l}{\tau_k}\int_{t_{k-1}}^{t_{k-1}+z\tau_k}(s-t_{k-1})e^{-s^l(t_{n-\theta}-s)}\zd s, \quad  1\leq k\leq n-1,\quad z\in [0,1],$$
	with its first-order derivative $G_k'(z)=zs^l\tau_ke^{-s^l(t_{n-\theta}-(t_{k-1}+z\tau_k))}$ for $1\leq k\leq n-1$. By using the Cauchy differential mean-value theorem, there exist $\xi \in (0,1)$ such that
	$$ \frac{J^{(k+1,l)}}{J^{(k,l)}}=\frac{G_{k+1}(1)}{G_k(1)}=\frac{G_{k+1}(1)-G_{k+1}(0)}{G_k(1)-G_k(0)}=\frac{G_{k+1}'(\xi)}{G_k'(\xi)}=\frac{\tau_{k+1}e^{-s^l(t_{n-\theta}-(t_{k}+\xi\tau_{k+1}))}}{\tau_ke^{-s^l(t_{n-\theta}-(t_{k-1}+\xi\tau_k))}}\geq \frac{1}{\rho_k},$$
which yields the inequality (iii).
\end{proof}
\begin{lemma}\label{plus}
	The positive coefficients $c^{(k,l)}$ in \eqref{ck} satisfy
	$$c^{(k+1,l)}-c^{(k,l)}=I^{(k+1,l)}+J^{(k,l)}, \quad 1\leq k\leq n-1 ~(2\leq n\leq N).$$
\end{lemma}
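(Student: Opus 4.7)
The identity is a clean integration-by-parts calculation that exploits the fact that the exponential kernel in the integrands is (up to $s^l$) the derivative of itself: $\tfrac{d}{ds}e^{-s^l(t_{n-\theta}-s)} = s^l e^{-s^l(t_{n-\theta}-s)}$. So my plan is to handle $I^{(k+1,l)}$ and $J^{(k,l)}$ separately by integrating by parts, expressing each in terms of the corresponding $c^{(j,l)}$ plus a boundary term at $s=t_k$, and then observe that the two boundary contributions cancel.

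Concretely, I first treat $J^{(k,l)}$. Choosing $u=(s-t_{k-1})/\tau_k$ and $dv=s^l e^{-s^l(t_{n-\theta}-s)}\zd s$, so that $v=e^{-s^l(t_{n-\theta}-s)}$ and $du=\zd s/\tau_k$, one obtains
\begin{align*}
J^{(k,l)} &= \Bigl[\tfrac{s-t_{k-1}}{\tau_k}e^{-s^l(t_{n-\theta}-s)}\Bigr]_{t_{k-1}}^{t_k}-\tfrac{1}{\tau_k}\int_{t_{k-1}}^{t_k}e^{-s^l(t_{n-\theta}-s)}\zd s\\
&= e^{-s^l(t_{n-\theta}-t_k)}-c^{(k,l)}.
\end{align*}
Next I do the same for $I^{(k+1,l)}$ with $u=(t_{k+1}-s)/\tau_{k+1}$, which yields
\begin{align*}
I^{(k+1,l)} &= \Bigl[\tfrac{t_{k+1}-s}{\tau_{k+1}}e^{-s^l(t_{n-\theta}-s)}\Bigr]_{t_k}^{t_{k+1}}+\tfrac{1}{\tau_{k+1}}\int_{t_k}^{t_{k+1}}e^{-s^l(t_{n-\theta}-s)}\zd s\\
&= -e^{-s^l(t_{n-\theta}-t_k)}+c^{(k+1,l)}.
\end{align*}
Adding these two equalities, the boundary values at $s=t_k$ exactly cancel and one obtains
\[
I^{(k+1,l)}+J^{(k,l)} = c^{(k+1,l)}-c^{(k,l)},
\]
which is the claimed identity.

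This is essentially a one-line computation once the integration-by-parts split is set up, so I do not anticipate any real obstacle. The only things to keep in mind are (i) the range $1\leq k\leq n-1$ with $2\leq n\leq N$ ensures both $J^{(k,l)}$ and $I^{(k+1,l)}$ are well-defined (both intervals $[t_{k-1},t_k]$ and $[t_k,t_{k+1}]$ lie in $[0,t_{n-\theta}]$), and (ii) the signs: $J$ carries the factor $(s-t_{k-1})/\tau_k$ whose boundary value is $+1$ at $s=t_k$, whereas $I$ carries $(t_{k+1}-s)/\tau_{k+1}$ with boundary value $+1$ at $s=t_k$ on the left endpoint, producing the opposite sign upon integration by parts — and it is precisely this sign difference that forces the exponential boundary terms to cancel.
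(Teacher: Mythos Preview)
Your proof is correct and follows essentially the same approach as the paper: both establish the two intermediate identities $I^{(k+1,l)}=c^{(k+1,l)}-e^{-s^l(t_{n-\theta}-t_k)}$ and $J^{(k,l)}=e^{-s^l(t_{n-\theta}-t_k)}-c^{(k,l)}$ and then add them. The only cosmetic difference is that the paper derives these identities by writing the integrand as an inner integral via the fundamental theorem of calculus and then exchanging the order of integration, whereas you obtain the same formulas directly by integration by parts; the two computations are equivalent.
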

\begin{proof}
	For fixed $n~ (2\leq n\leq N)$, from the definition \eqref{ck}, we exchange the order of integration to find (for $1\leq k\leq n-1$)
\begin{align}\nonumber
c^{(k+1,l)}-e^{-s^l(t_{n-\theta}-t_k)}=&\int_{t_{k}}^{t_{k+1}}\frac{e^{-s^l(t_{n-\theta}-s)}-e^{-s^l(t_{n-\theta}-t_k)}}{\tau_{k+1}}\zd s\\\label{cI}
=&\int_{t_{k}}^{t_{k+1}}\int_{t_k}^{s}\frac{s^le^{-s^l(t_{n-\theta}-y)}}{\tau_{k+1}}\zd y\zd s=I^{(k+1,l)}.
	\end{align}
	Similarly, for $1\leq k\leq n-1~(2\leq n\leq N)$,
	\begin{align}\label{cJ}
		c^{(k,l)}-e^{-s^l(t_{n-\theta}-t_k)}=\int_{t_{k-1}}^{t_{k}}\frac{e^{-s^l(t_{n-\theta}-s)}-e^{-s^l(t_{n-\theta}-t_k)}}{\tau_{k}}\zd s=-J^{(k,l)}.
	\end{align}
	The proof is complete.
\end{proof}
\begin{lemma}\label{cd}
	If $\mathbf{M1}$ holds, for $1\leq k\leq n-1~(2\leq n\leq N)$, the positive coefficients $c^{(k,l)}$ in \eqref{ck} satisfy
	\begin{align*}
	c^{(k+1,l)}-c^{(k,l)}>\left\{\begin{array}{ll}
	d^{(2,l)},\quad & k=1,\\
	d^{(k+1,l)}+\rho_{k-1}d^{(k-1,l)}  & 2\leq k\leq n-1,
	\end{array}\right.
	\end{align*}
\end{lemma}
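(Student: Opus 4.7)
The plan is to combine Lemma \ref{plus} (which gives $c^{(k+1,l)}-c^{(k,l)}=I^{(k+1,l)}+J^{(k,l)}$) with the three bounds of Lemma \ref{IJ}, and to exploit the specific numerical value $\rho=7/4$ in \textbf{M1} at exactly one place.

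For the case $k=1$, I would note that Lemma \ref{IJ}(i) with $k$ replaced by $k+1=2$ yields $I^{(2,l)}\geq \frac{1+\rho_{2}}{\rho_{2}}\,d^{(2,l)}>d^{(2,l)}$ since $\rho_{2}>0$. Combining with $J^{(1,l)}>0$ and Lemma \ref{plus} gives $c^{(2,l)}-c^{(1,l)}=I^{(2,l)}+J^{(1,l)}>d^{(2,l)}$, as required.

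For $2\leq k\leq n-1$, I would bound the two summands separately. The $I^{(k+1,l)}$ term again handles $d^{(k+1,l)}$ via Lemma \ref{IJ}(i): $I^{(k+1,l)}\geq \frac{1+\rho_{k+1}}{\rho_{k+1}}d^{(k+1,l)}>d^{(k+1,l)}$. For $J^{(k,l)}$, chain Lemma \ref{IJ}(iii) and (ii):
\begin{equation*}
J^{(k,l)}\;\geq\;\frac{1}{\rho_{k-1}}J^{(k-1,l)}\;\geq\;\frac{1}{\rho_{k-1}}\cdot\frac{2(1+\rho_{k-1})}{\rho_{k-1}}\,d^{(k-1,l)}\;=\;\frac{2(1+\rho_{k-1})}{\rho_{k-1}^{2}}\,d^{(k-1,l)}.
\end{equation*}
So it suffices to verify the scalar inequality $\frac{2(1+\rho_{k-1})}{\rho_{k-1}^{2}}\geq \rho_{k-1}$, i.e.\ $\rho_{k-1}^{3}-2\rho_{k-1}-2\leq 0$. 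The cubic $p(\rho)=\rho^{3}-2\rho-2$ is increasing on $[1,\infty)$, and at the boundary value $\rho=7/4$ of \textbf{M1} one computes $p(7/4)=343/64-7/2-2=-9/64<0$; hence under \textbf{M1} the inequality holds, giving $J^{(k,l)}\geq \rho_{k-1}\,d^{(k-1,l)}$ and completing the lemma.

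The main (and essentially only) obstacle is the last arithmetic check: the threshold $\rho=7/4$ is sharp for the cubic $\rho^{3}\leq 2(1+\rho)$, and this is precisely why the authors impose that particular value in \textbf{M1}. Everything else is algebraic bookkeeping on top of Lemmas \ref{plus} and \ref{IJ}.
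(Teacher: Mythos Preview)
Your argument is correct and essentially identical to the paper's: both combine Lemma~\ref{plus} with Lemma~\ref{IJ}(i)--(iii) and reduce the $J^{(k,l)}$ estimate to the same scalar inequality $\rho_{k-1}^{3}\le 2(1+\rho_{k-1})$, which the paper phrases as $2+2y-y^{3}\ge 9/64$ on $[0,7/4]$. One small patch: your monotonicity remark for $p(\rho)=\rho^{3}-2\rho-2$ only covers $\rho_{k-1}\in[1,7/4]$, whereas \textbf{M1} merely gives $\rho_{k-1}\in(0,7/4]$; the missing range $\rho_{k-1}\in(0,1)$ is trivial since then $\rho_{k-1}^{3}<1<2+2\rho_{k-1}$ (and, incidentally, $7/4$ is not the exact root of the cubic, so the bound is not literally sharp).
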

\begin{proof}
	By using Lemma \ref{IJ} (ii) and (iii),
	$$\frac{\rho_{k-1}^3}{2(1+\rho_{k-1})}J^{(k,l)}\geq \frac{\rho_{k-1}^2}{2(1+\rho_{k-1})}J^{(k-1,l)}\geq \rho_{k-1}d^{(k-1,l)},\quad~2\leq k\leq n-1.$$
Since $2+2y-y^3\geq 9/64$ for $y\in [0,7/4]$, the mesh assumption $\mathbf{M1}$ leads to
	\begin{align}\label{Jd}
		J^{(k,l)}=\frac{\rho_{k-1}^3}{2(1+\rho_{k-1})}J^{(k,l)}+\frac{2+2\rho_{k-1}-\rho_{k-1}^3}{2(1+\rho_{k-1})}J^{(k,l)}>\rho_{k-1}d^{(k-1,l)},\quad~2\leq k\leq n-1.
	\end{align}
	Hence, it follows from Lemma \ref{IJ} (i) and Lemma \ref{plus} that
		\begin{align*}
	c^{(k+1,l)}-c^{(k,l)}=I^{(k+1,l)}+J^{(k,l)}>\left\{\begin{array}{ll}
	d^{(2,l)},\quad & k=1,\\
	d^{(k+1,l)}+\rho_{k-1}d^{(k-1,l)}  & 2\leq k\leq n-1,
	\end{array}\right.
	\end{align*}
	The proof is complete.
\end{proof}
\begin{lemma}\label{special}
	If the tolerance error $\epsilon$ of the SOE approximation satisfies $\epsilon\leq\dfrac{\theta}{1-\alpha} \omega_{1-\alpha}(T)$, then the discrete coefficients $a_{n-k}^{(n)}$ of \eqref{ank} satisfies\\
(i) $\displaystyle a_0^{(2)}-\sum_{l=1}^{N_q}\varpi^lc^{(1,l)}\geq 0$;
~~(ii) $a_0^{(n)}-\sum_{l=1}^{N_q}\varpi^lc^{(n-1,l)}-\sum_{l=1}^{N_q}\varpi^l\rho_{n-2}d^{(n-2,l)}>0.$
\end{lemma}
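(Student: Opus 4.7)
My plan is to unify parts (i) and (ii) through a single reduction: I will bound the subtracted quantity pointwise in $l$ by $e^{-s^l(1-\theta)\tau_n}$, using identities already derived in the proof of Lemma~\ref{cd}, and then close the argument with the SOE estimate of Lemma~\ref{FA} together with a direct computation using the closed form $\omega_{1-\alpha}(t)=t^{-\alpha}/\Gamma(1-\alpha)$.

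First I would invoke identity \eqref{cJ} at $k=n-1$, which reads $c^{(n-1,l)}+J^{(n-1,l)}=e^{-s^l(1-\theta)\tau_n}$. For part (i) ($n=2$), the positivity $J^{(1,l)}>0$ immediately yields $c^{(1,l)}<e^{-s^l(1-\theta)\tau_2}$. For part (ii) ($n\geq 3$), I would apply the sharper estimate $J^{(n-1,l)}>\rho_{n-2}d^{(n-2,l)}$ that appears as \eqref{Jd} inside the proof of Lemma~\ref{cd} (this is where mesh assumption $\mathbf{M1}$ enters), obtaining $c^{(n-1,l)}+\rho_{n-2}d^{(n-2,l)}<e^{-s^l(1-\theta)\tau_n}$. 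Summing against $\varpi^l$ and applying Lemma~\ref{FA} at $u=(1-\theta)\tau_n$ then gives, in both parts,
\[
\sum_{l=1}^{N_q}\varpi^l\bigl(c^{(n-1,l)}+\rho_{n-2}d^{(n-2,l)}\bigr)<\omega_{1-\alpha}((1-\theta)\tau_n)+\epsilon,
\]
with the $\rho_{n-2}d^{(n-2,l)}$ term understood to be absent when $n=2$.

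It then remains only to verify $a_0^{(n)}\geq \omega_{1-\alpha}((1-\theta)\tau_n)+\epsilon$. From the closed form $a_0^{(n)}=(1-\theta)^{1-\alpha}\tau_n^{-\alpha}/\Gamma(2-\alpha)$ and the identities $\theta=\alpha/2$ and $\Gamma(2-\alpha)=(1-\alpha)\Gamma(1-\alpha)$, a direct computation gives
\[
a_0^{(n)}-\omega_{1-\alpha}((1-\theta)\tau_n)=\frac{((1-\theta)\tau_n)^{-\alpha}}{\Gamma(2-\alpha)}\bigl[(1-\theta)-(1-\alpha)\bigr]=\frac{\theta\,((1-\theta)\tau_n)^{-\alpha}}{\Gamma(2-\alpha)}.
\]
Since $(1-\theta)\tau_n\leq T$, this is at least $\theta T^{-\alpha}/\Gamma(2-\alpha)=\theta\omega_{1-\alpha}(T)/(1-\alpha)$, which is $\geq\epsilon$ by the standing hypothesis. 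Chaining the inequalities delivers both (i) and (ii).

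The main obstacle is the structural observation in the first step: recognizing that the combination $c^{(n-1,l)}+\rho_{n-2}d^{(n-2,l)}$ collapses, via \eqref{cJ} and the strict lower bound \eqref{Jd} harvested from the proof of Lemma~\ref{cd}, into the single pointwise value $e^{-s^l(1-\theta)\tau_n}$ of the very function being SOE-approximated. Once that identification is made, the remainder is an elementary arithmetic manipulation of $\omega_{1-\alpha}$ and the SOE tolerance.
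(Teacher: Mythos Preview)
Your proof is correct and follows essentially the same route as the paper: both use \eqref{cJ} (together with \eqref{Jd} for part (ii)) to bound the subtracted quantity by $\sum_l\varpi^l e^{-s^l(1-\theta)\tau_n}$, invoke Lemma~\ref{FA}, and then verify $a_0^{(n)}-\omega_{1-\alpha}((1-\theta)\tau_n)=\frac{\theta}{1-\alpha}\omega_{1-\alpha}((1-\theta)\tau_n)\geq\frac{\theta}{1-\alpha}\omega_{1-\alpha}(T)\geq\epsilon$. The only cosmetic difference is that for (i) the paper averages the SOE estimate over $[t_0,t_1]$ to obtain $\sum_l\varpi^l c^{(1,l)}\leq a_1^{(2)}+\epsilon$, whereas you first use $J^{(1,l)}>0$ to bound $c^{(1,l)}$ pointwise and then apply Lemma~\ref{FA} at a single point; both variants are equivalent.
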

\begin{proof}
Noticing $a_0^{(n)}=\frac{1-\theta}{1-\alpha}\omega_{1-\alpha}(t_{n-\theta}-t_{n-1})$ from \eqref{ank}, then we have
$$a_0^{(n)}-a_1^{(n)}\geq a_0^{(n)}-\omega_{1-\alpha}(t_{n-\theta}-t_{n-1})=\dfrac{\theta}{1-\alpha}\omega_{1-\alpha}(t_{n-\theta}-t_{n-1})\geq \dfrac{\theta}{1-\alpha} \omega_{1-\alpha}(T)\geq \epsilon.$$
	Therefore, Lemma \ref{FA} gives the result (i) directly since $\displaystyle a_0^{(2)}-\sum_{l=1}^{N_q}\varpi^lc^{(1,l)}\geq a_0^{(2)}-a_1^{(2)}-\epsilon\geq 0$. 
By \eqref{cJ} and \eqref{Jd}, we have
	$$\sum_{l=1}^{N_q}\varpi^l(c^{(n-1,l)}+\rho_{n-2}d^{(n-2,l)})<\sum_{l=1}^{N_q}\varpi^l(c^{(n-1,l)}+J^{(n-1,l)})=\sum_{l=1}^{N_q}\varpi^le^{-s^l(t_{n-\theta}-t_{n-1})},$$
then we apply Lemma \eqref{FA} to arrive that
	$$a_0^{(n)}-\sum_{l=1}^{N_q}\varpi^lc^{(n-1,l)}-\sum_{l=1}^{N_q}\varpi^l\rho_{n-2}d^{(n-2,l)}>\epsilon+\omega_{1-\alpha}(t_{n-\theta}-t_{n-1})-\sum_{l=1}^{N_q}\varpi^le^{-s^l(t_{n-\theta}-t_{n-1})}\geq 0.$$
	So the inequality (ii) is proved.
\end{proof}
	

We now verify that the coefficients  ${A}_{n-k}^{(n)}$ satisfy $\mathbf{A1}$ and ${\bf A2}$. Part (I) in the next lemma ensures that assumption $\mathbf{A2}$ is valid, while part (II) implies that assumption $\mathbf{A1}$ holds true with $\pi_A=11/4$.
\begin{lemma}
If the tolerance error $\epsilon$ of SOE approximation satisfies\\ $\epsilon\leq \min \{\frac{7}{11}\omega_{1-\alpha}(T), \frac{\theta}{1-\alpha} \omega_{1-\alpha}(T)\}$, then the discrete convolutional kernel ${A}_{n-k}^{(n)}$ in \eqref{An-k} satisfies\\
	    (I)\quad ${A}_{n-k-1}^{(n)}>{A}_{n-k}^{(n)}>0, \qquad 1\leq k\leq n-1$,\\
		(II)\quad ${A}_{n-k}^{(n)}\geq \dfrac{4}{11\tau_k} \int_{t_{k-1}}^{t_k}\omega_{1-\alpha}(t_n-s)ds, \qquad 1\leq k\leq n$.		
\end{lemma}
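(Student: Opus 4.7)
The plan is to verify (I) and (II) by case analysis on the index $k$ in the piecewise definition \eqref{An-k}, noting that the strict positivity in (I) follows at once from the quantitative lower bound in (II).

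For (I), I would expand $A_{n-k-1}^{(n)}-A_{n-k}^{(n)}$ directly from \eqref{An-k} in three subcases: $k=n-1$ (comparing $A_0^{(n)}$ with $A_1^{(n)}$), $2\leq k\leq n-2$, and $k=1$. In each case the difference organizes as a sum of a non-negative quantity (positive by Lemma \ref{special}(ii) for $k=n-1$ with $n\geq 3$, by Lemma \ref{special}(i) for $n=2$, or by Lemma \ref{cd} for $1\leq k\leq n-2$) plus a strictly positive remainder of the form $(1+\rho_k)\sum_l\varpi^l d^{(k,l)}$. For instance, when $2\leq k\leq n-2$ direct substitution yields
\begin{align*}
A_{n-k-1}^{(n)}-A_{n-k}^{(n)}=\sum_l\varpi^l\bigl[(c^{(k+1,l)}-c^{(k,l)})-d^{(k+1,l)}-\rho_{k-1}d^{(k-1,l)}\bigr]+(1+\rho_k)\sum_l\varpi^l d^{(k,l)},
\end{align*}
whose bracketed part is positive by Lemma \ref{cd}.

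For (II), the diagonal case $k=n$ follows from $A_0^{(n)}\geq a_0^{(n)}=\frac{(1-\theta)^{1-\alpha}\tau_n^{-\alpha}}{\Gamma(2-\alpha)}$ combined with the elementary inequality $(1-\theta)^{1-\alpha}\geq\frac{4}{11}$ for $\theta=\alpha/2$, $\alpha\in(0,1)$, and the computation $\frac{1}{\tau_n}\int_{t_{n-1}}^{t_n}\omega_{1-\alpha}(t_n-s)\,\mathrm{d}s=\frac{\tau_n^{-\alpha}}{\Gamma(2-\alpha)}$. For $1\leq k\leq n-1$, the starting identity is
\begin{align*}
c^{(k,l)}-d^{(k,l)}=\frac{1}{\tau_k(\tau_k+\tau_{k+1})}\int_{t_{k-1}}^{t_k}[\tau_{k+1}+2(t_k-s)]\,e^{-s^l(t_{n-\theta}-s)}\,\mathrm{d}s,
\end{align*}
after which one sums in $l$, invokes Lemma \ref{FA} to replace $\sum_l\varpi^l e^{-s^l(t_{n-\theta}-s)}$ by $\omega_{1-\alpha}(t_{n-\theta}-s)$ within error $\epsilon$, uses the pointwise bound $[\tau_{k+1}+2(t_k-s)]\geq\tau_{k+1}$ together with $\mathbf{M1}$ to extract the factor $\frac{1}{1+\rho_k}\geq\frac{4}{11}$, and finally applies $\omega_{1-\alpha}(t_n-s)\leq\omega_{1-\alpha}(t_{n-\theta}-s)$ to land on the target integral.

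The main obstacle is tracking the SOE error $\epsilon$ without losing the sharp constant $4/11$. The classical non-SOE bound $a^{(k)}-b^{(k)}\geq\frac{4}{11\tau_k}\int\omega_{1-\alpha}(t_n-s)\,\mathrm{d}s$ available from \cite{Liao-2order,Adiscrete} is degraded by a term of order $\epsilon$ after the SOE substitution. For $2\leq k\leq n-1$ the non-negative $\sum_l\varpi^l\rho_{k-1}d^{(k-1,l)}$ (whose size can be quantified via Lemma \ref{IJ}) compensates this deficit. For $k=1$ there is no such extra term, and the slack must instead come from the strict inequalities in the chain, guaranteed by the assumption $\epsilon\leq\tfrac{7}{11}\omega_{1-\alpha}(T)$ together with $\omega_{1-\alpha}(t_{n-\theta}-s)>\omega_{1-\alpha}(t_n-s)$; the complementary proportions $4/11$ and $7/11$ are precisely what makes this balance work.
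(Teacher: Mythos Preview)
Your treatment of part~(I) is correct and coincides with the paper's proof: the same case split on $k$ and the same invocation of Lemmas~\ref{cd} and~\ref{special}, with the strictly positive remainder $(1+\rho_k)\sum_l\varpi^l d^{(k,l)}$.

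Part~(II), however, has a real gap at the step where you apply $\tau_{k+1}+2(t_k-s)\geq\tau_{k+1}$. After this crude bound and the SOE substitution you obtain only
\[
\sum_{l}\varpi^l\bigl(c^{(k,l)}-d^{(k,l)}\bigr)\;\geq\;\frac{1}{1+\rho_k}\,a_{n-k}^{(n)}\;-\;\epsilon\;\geq\;\frac{4}{11}\,a_{n-k}^{(n)}\;-\;\epsilon,
\]
and your proposed remedies do not close the deficit. For $k=1$ with $\rho_1=7/4$, the only slack left is $\tfrac{4}{11}\bigl[a_{n-1}^{(n)}-\tfrac{1}{\tau_1}\int_{t_0}^{t_1}\omega_{1-\alpha}(t_n-s)\,\mathrm{d}s\bigr]$, which is $O(\theta\tau_n)$ and can be made arbitrarily small relative to the permitted $\epsilon=\tfrac{7}{11}\omega_{1-\alpha}(T)$. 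For $k\geq2$, the compensating term $\sum_l\varpi^l\rho_{k-1}d^{(k-1,l)}$ carries an internal factor $s^l$ (see~\eqref{dkl}), and Lemma~\ref{FA} gives no control over $\sum_l\varpi^l s^l e^{-s^lt}$, so no uniform lower bound of size $\epsilon$ is available. The paper instead keeps the discarded piece: from Lemma~\ref{IJ}(i) and the identity~\eqref{cI} it gets $d^{(k,l)}\leq\tfrac{7}{11}\bigl(c^{(k,l)}-e^{-s^l(t_{n-\theta}-t_{k-1})}\bigr)$, hence $c^{(k,l)}-d^{(k,l)}\geq\tfrac{4}{11}c^{(k,l)}+\tfrac{7}{11}e^{-s^l(t_{n-\theta}-t_{k-1})}$. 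Summing in $l$ and applying Lemma~\ref{FA} to each piece yields a total error $-\epsilon$ together with the endpoint contribution $\tfrac{7}{11}\omega_{1-\alpha}(t_{n-\theta}-t_{k-1})\geq\tfrac{7}{11}\omega_{1-\alpha}(T)\geq\epsilon$, which absorbs the deficit uniformly in $k$ without ever invoking $\rho_{k-1}d^{(k-1,l)}$. Your starting identity can be made to yield the same bound (split off $\tfrac{4}{11}(\tau_k+\tau_{k+1})$ from $\tau_{k+1}+2(t_k-s)$; the non-negative remainder integrates to $\tfrac{7}{11}\tau_k(\tau_k+\tau_{k+1})$), but the argument as written discards exactly the term that makes the ``complementary proportions $4/11$ and $7/11$'' balance you allude to actually work.
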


\begin{proof}
Recalling the definition \eqref{An-k}, it is not difficult to verify that\\
(1)~If $k=1$ for $n=2$,
\begin{align*}
A_0^{(2)}-A_1^{(2)} &= a_0^{(2)}+\sum_{l=1}^{N_q}\varpi^l\rho_{1}d^{(1,l)}-\sum_{l=1}^{N_q}\varpi^l(c^{(1,l)}-d^{(1,l)})\\
&=\sum_{l=1}^{N_q}\varpi^l(\rho_{1}+1)d^{(1,l)}+a_0^{(2)}-\sum_{l=1}^{N_q}\varpi^lc^{(1,l)};
\end{align*}
(2)~If $k=n-1$ for $n\geq 3$,
\begin{align*}
A_0^{(n)}-A_1^{(n)} &= a_0^{(n)}+\sum_{l=1}^{N_q}\varpi^l\rho_{n-1}d^{(n-1,l)}-\sum_{l=1}^{N_q}\varpi^l\left(\rho_{n-2}d^{(n-2,l)}+ c^{(n-1,l)}-d^{(n-1,l)}\right) \\
&=\sum_{l=1}^{N_q}\varpi^l(\rho_{n-1}+1)d^{(n-1,l)}+a_0^{(n)}-\sum_{l=1}^{N_q}\varpi^lc^{(n-1,l)}-\sum_{l=1}^{N_q}\varpi^l\rho_{n-2}d^{(n-2,l)};
\end{align*}
(3)~If $k=1$ for $n\geq 3$,
\begin{align*}
A_{n-2}^{(n)}-A_{n-1}^{(n)} &= \sum_{l=1}^{N_q}\varpi^l\left(\rho_{1}d^{(1,l)}+ c^{(2,l)}-d^{(2,l)}\right)-\sum_{l=1}^{N_q}\varpi^l(c^{(1,l)}-d^{(1,l)}) \\
&=\sum_{l=1}^{N_q}\varpi^l(\rho_{1}+1)d^{(1,l)}+\sum_{l=1}^{N_q}\varpi^l\left( c^{(2,l)}-c^{(1,l)}-d^{(2,l)}\right) ;
\end{align*}
(4)~If $2\leq k\leq n-2$ for $n\geq 4$,
\begin{align*}
A_{n-k-1}^{(n)}-A_{n-k}^{(n)} &= \sum_{l=1}^{N_q}\varpi^l\left(\rho_{k}d^{(k,l)}+ c^{(k+1,l)}-d^{(k+1,l)}\right)-\sum_{l=1}^{N_q}\varpi^l\left(\rho_{k-1}d^{(k-1,l)}+ c^{(k,l)}-d^{(k,l)}\right) \\
&=\sum_{l=1}^{N_q}\varpi^l(\rho_{k}+1)d^{(k,l)}+\sum_{l=1}^{N_q}\varpi^l\left( c^{(k+1,l)}-c^{(k,l)}-d^{(k+1,l)}-\rho_{k-1}d^{(k-1,l)}\right) .
\end{align*}
Hence the claimed inequality in the part(I) follows from Lemma \ref{cd} and Lemma \ref{special} directly.

According to the definitions \eqref{An-k} and \eqref{ank}, the inequality in part (II) holds obviously while $k=n$.
 Under the assumption $\mathbf{M1}$, and by using Lemma \ref{IJ} (i) and \eqref{cI}, one has
$$d^{(k,l)}\leq \dfrac{\rho_k}{\rho_k+1}I^{(k,l)}= \dfrac{\rho_k}{\rho_k+1}\left( c^{(k,l)}-e^{-s^l(t_{n-\theta}-t_{k-1})}\right) \leq \frac{7}{11}\left( c^{(k,l)}-e^{-s^l(t_{n-\theta}-t_{k-1})}\right).$$
Moreover, from  \eqref{ank}  and \eqref{ck}, and using Lemma \ref{FA}, we have 
$$\sum_{l=1}^{N_q}\varpi^lc^{(k,l)}\geq a_{n-k}^{(n)}-\epsilon,\quad \mbox{for}~1\leq k\leq n-1.$$
Thus, the lower bounds of $A_{n-k}^{(n)}$ for $1\leq k\leq n-1$ follow from Lemma \ref{FA} because the definition \eqref{An-k} implies that
\begin{align*}
	A_{n-k}^{(n)}&\geq \sum_{l=1}^{N_q}\varpi^l(c^{(k,l)}-d^{(k,l)})\geq \frac{4}{11}\sum_{l=1}^{N_q}\varpi^lc^{(k,l)}+\frac{7}{11}\sum_{l=1}^{N_q}\varpi^le^{-s^l(t_{n-\theta}-t_{k-1})}\\
	&\geq \frac{4}{11}a_{n-k}^{(n)}+\frac{7}{11}\omega_{1-\alpha}(t_{n-\theta}-t_{k-1})-\epsilon\geq \frac{4}{11}a_{n-k}^{(n)}+\frac{7}{11}\omega_{1-\alpha}(T)-\epsilon\\
	&\geq \frac{4}{11}a_{n-k}^{(n)}\geq \dfrac{4}{11\tau_k} \int_{t_{k-1}}^{t_k}\omega_{1-\alpha}(t_n-s)ds.
\end{align*}
The proof of part (II) is complete.
\end{proof}

\section{The fast and nonuniform high-order scheme}\label{NumericalScheme}

\subsection{The numerical scheme}
Let $u_i^n$ be the discrete approximation of solution $u(x_i,t_n)$ for $x_i\in \bar{\Omega}_h$, $0\leq n\leq N$. Considering the first equation in \eqref{GoverningEq} at the grid points $(x_i,t_{n-\theta})$, utilizing the fast nonuniform Alikhanov formula \eqref{fastAlik} and the spatial high-order approximation \eqref{high-order-app}, we can obtain
\begin{align}\label{sc1}
\displaystyle {\cal H}g_i^{n-\theta}=\left(a+\frac{h^2b^2}{12a}\right)\delta_x^2{u_i^{n-\theta}}+b\delta_{\hat{x}}u_i^{n-\theta}+R_i^{n-\theta} \quad \mbox{for}~~ x_i\in \Omega_h, ~1\leq n\leq N,
\end{align}
where
$$g_i^{n-\theta}:=({\cal D}_{\tau}^\alpha u_i)^{n-\theta}+cu_i^{n-\theta}-f(x_i,t_{n-\theta}),$$
and
$R_i^{n-\theta}=\mathcal{H}(R_{t1})_i^{n-\theta}+\mathcal{H}(R_{t2})_i^{n-\theta}+(R_{t3})_i^{n-\theta}+(R_s)_i^{n-\theta}$,
in which $(R_{s})_i^{n-\theta}=\mathcal{O}(h^4)$ (according to \eqref{high-order-app}), and
\begin{align*}
(R_{t1})_i^{n-\theta}=&(\mathcal{D}_{\tau}^{\alpha}u_i)^{n-\theta}-\mathcal{D}_{t}^{\alpha}u(x_i,t_{n-\theta}),\\
 (R_{t2})_i^{n-\theta}=&-c(R_{u})_i^{n-\theta},\\
(R_{t3})_i^{n-\theta}=&\left(a+\frac{h^2b^2}{12a}\right)\delta_x^2(R_{u})_i^{n-\theta}+b\delta_{\hat{x}}(R_{u})_i^{n-\theta}.
\end{align*}
with the error of the weighted time approximation at $t_{n-\theta}$ is given as
$$(R_{u})_i^{n-\theta}=u(x_i,t_{n-\theta})-[\theta u(x_i,t_{n-1})+(1-\theta) u(x_i,t_n)].$$
Based on the regularity assumption \eqref{regularity1} and the mesh condition $\mathbf{M1}$, and referring to \cite[Lemma 3.6 and Lemma 3.8]{Liao-2order}, we can obtain that
\begin{align}\label{PR1}
&\displaystyle\sum_{k=1}^nP_{n-k}^{(n)}|(R_{t1})_i^{k-\theta}| \leq C\left( \tau_1^\sigma/\sigma+t_1^{\sigma-3}\tau_2^3+\frac{1}{1-\alpha}\mathop{\mbox{max}}\limits_{2\leq k \leq n}t_k^\alpha t_{k-1}^{\sigma-3}\tau_k^3/\tau_{k-1}^\alpha\right),\\\label{PR2}
&\displaystyle\sum_{k=1}^nP_{n-k}^{(n)}|(R_{u})_i^{k-\theta}| \leq C\left( \tau_1^{\sigma+\alpha}/\sigma+t_n^\alpha\mathop{\mbox{max}}\limits_{2\leq k \leq n}t_{k-1}^{\sigma-2}\tau_k^2\right),
\end{align}
where $P_{n-k}^{(n)}$ is called the discrete complementary convolution kernels which satisfying the basic rule:
$\sum_{j=k}^nP_{n-j}^{(n)}A_{j-k}^{(j)} \equiv 1$ for $1\leq k\leq n\leq N$.
Moreover, the complementary kernels are nonnegative and satisfy (\cite[Lemma 2.1]{Adiscrete})
\begin{align}\label{Psub}
\displaystyle \sum_{j=1}^{n}P_{n-j}^{(n)}\omega_{1+(m-1)\alpha}(t_j)\leq \frac{11}{4}\omega_{1+m\alpha}(t_n)  \quad\mbox{ for }~ m=0,1, ~\mbox{and}~1\leq n\leq N.
\end{align}
One may refer to \cite{Adiscrete,Liao-2order} for more details about the $P_{n-k}^{(n)}$ which is a crucial tool in the numerical analysis.
With \eqref{Psub} (for $m=1$), it is easy to show that
\begin{align}\label{PRs}
\max_{1\leq k \leq n}\sum_{j=1}^kP_{k-j}^{(k)}|(R_{s})_i^{j-\theta}|\leq C h^4.
\end{align}
Thus, combining \eqref{PR1}--\eqref{PR2} and \eqref{PRs}, it holds
\begin{align}\label{PR}
\max_{1\leq k\leq n}\sum_{k=1}^nP_{n-k}^{(n)}\|R^{k-\theta}\| \leq
C\left( \tau_1^\sigma/\sigma+\max_{2\leq k \leq n}t_k^\alpha t_{k-1}^{\sigma-3}\tau_k^3/\tau_{k-1}^\alpha+t_n^\alpha\max_{2\leq k \leq n}t_{k-1}^{\sigma-2}\tau_k^2+h^4\right).
\end{align}
Omitting the truncation errors in \eqref{sc1}, we get a fast and high-order nonuniform scheme for the problem \eqref{GoverningEq}:
\begin{align}\label{sc2}
\mathcal{H}(\mathcal{D}_t^{\alpha}u_i^{n-\theta}+cu_i^{n-\theta}-f_i^{n-\theta})=\left(a+\frac{h^2b^2}{12a}\right)\delta_x^2{u_i^{n-\theta}}+b\delta_{\hat{x}}u_i^{n-\theta}
, \quad x_i\in \Omega_h, ~1\leq n\leq N,
\end{align}
equipped with the initial condition $u_i^0=\varphi_i$ for $x_i\in \Omega_h$, and the boundary conditions $u_0^n = u_M^n =0$.

Denote ${\bf u}^n:=(u_1^n,u_2^n,\ldots,u_{M-1}^n)^T$, ${\bf f}^{n-\theta}=(f_1^{n-\theta},f_2^{n-\theta},\ldots,f_{M-1}^{n-\theta})^T$, and the matrices of the central differences
\begin{eqnarray}\nonumber
A:=\left[
    \begin{array}{ccccc}
       -2 &1 & & & \\
       1 &-2 &1 && \\
       & \ddots &\ddots&\ddots& \\
       && 1& -2& 1\\
       &&&1 &-2\\
    \end{array}
\right],
\quad
S:=\left[
    \begin{array}{ccccc}
       0 &1 &&&\\
       -1 &0 &1 &&\\
       & \ddots &\ddots  &\ddots &\\
       & & -1 &0 &1\\
       & &  &-1 &0\\
    \end{array}
\right].
\end{eqnarray}
We can rewrite the scheme \eqref{sc2} into the following equivalent matrix-vector equation:
\begin{align}\label{matrix-form}
H\mathcal{D}_t^{\alpha}{\bf u}^{n-\theta}+cH{\bf u}^{n-\theta}-H{\bf f}^{n-\theta}-\widehat{\bf f}^{n-\theta}= \left(\frac{a}{h^2}+\frac{b^2}{12a}\right)A {\bf u}^{n-\theta}+\frac{b}{2h}S{\bf u}^{n-\theta},
\end{align}
where 
$$\widehat{\bf f}^{n-\theta}=\left[\left(\dfrac{1}{12}-\dfrac{hb}{24a}\right){f}^{n-\theta}_0,0,\cdots,0,\left(\dfrac{1}{12}+\dfrac{hb}{24a}\right){f}^{n-\theta}_M\right]^T,$$
 and $H=\dfrac{1}{12}A+\dfrac{hb}{24a}S+I$ with $I$ being the $(M-1)$-dimensional unit matrix.

\subsection{Stability and convergence}\label{Stability-convergence}

\begin{lemma}\cite[Corollary 2.3]{Liao-2order}\label{Du}
Under the condition $\mathbf{M1}$, the discrete Caputo formula satisfies
$$\displaystyle\langle(\mathcal{D}_{\tau}^{\alpha}v)^{n-\theta},v^{n-\theta}\rangle\geq \frac{1}{2}\sum_{k=1}^{n}A_{n-k}^{(n)}\nabla_\tau(\|v^k\|^2),   \quad~\mbox{ for }~ 1\leq n\leq N.$$
\end{lemma}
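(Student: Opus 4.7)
The plan is to reduce the claim to a quadratic-form positivity in the backward differences $w_k := \nabla_\tau v^k$. Using the compact representation $(\mathcal{D}_\tau^\alpha v)^{n-\theta} = \sum_{k=1}^n A_{n-k}^{(n)} w_k$ together with the elementary identity $\nabla_\tau(\|v^k\|^2) = \langle w_k,\, v^k+v^{k-1}\rangle$, the target inequality is equivalent to the non-negativity of
\begin{equation*}
F := \sum_{k=1}^n A_{n-k}^{(n)} \bigl\langle w_k,\ v^{n-\theta} - \tfrac12(v^k+v^{k-1})\bigr\rangle.
\end{equation*}
First I would handle the top index directly: a one-line computation gives $v^{n-\theta} - \tfrac12(v^n+v^{n-1}) = (\tfrac12-\theta)\,w_n$, yielding a manifestly non-negative contribution $(\tfrac12-\theta) A_0^{(n)} \|w_n\|^2$, since $\theta = \alpha/2 \in (0,\tfrac12)$.

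For $k<n$, telescoping produces $v^{n-\theta} - \tfrac12(v^k+v^{k-1}) = \tfrac12\, w_k + \sum_{j=k+1}^{n-1} w_j + (1-\theta)\, w_n$, so $F$ becomes a single quadratic form $\langle w,\mathbf{M}w\rangle$ on $w=(w_1,\dots,w_n)$. The leading $(n-1)\times(n-1)$ principal block of $\mathbf{M}$ has entries $\tfrac12 A_{n-\min(k,j)}^{(n)}$, the $n$-th coupling row/column has entries $\tfrac{1-\theta}{2}A_{n-k}^{(n)}$, and the $(n,n)$ corner is $(\tfrac12-\theta)A_0^{(n)}$. The leading block admits the explicit PSD factorization
\begin{equation*}
\tfrac12 \sum_{l=1}^{n-1} \bigl(A_{n-l}^{(n)} - A_{n-l+1}^{(n)}\bigr)\, \chi_l \chi_l^{T}, \qquad \chi_l := (\mathbf{1}_{k\ge l})_{k=1}^{n-1},
\end{equation*}
whose non-negativity relies crucially on the strict monotonicity $A_{n-l}^{(n)} - A_{n-l+1}^{(n)} > 0$ delivered by property $\mathbf{A2}$ in the preceding lemma (itself a consequence of the mesh restriction $\mathbf{M1}$).

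The main obstacle is then the absorption of the $w_n$-cross-couplings into the corner contribution $(\tfrac12-\theta)A_0^{(n)}$. A naive Schur-complement reduction does not close: the induced penalty $(1-\theta)^2/(1-2\theta)$ diverges as $\theta\to\tfrac12$, so one cannot dominate the off-diagonal mass by a single scalar bound. The correct route, carried out in detail in \cite[Corollary~2.3]{Liao-2order}, is to refactor $\mathbf{M}$ level by level: pair each increment $A_{n-l}^{(n)} - A_{n-l+1}^{(n)}$ with a compatible portion of the corner mass to build a rank-one PSD block with augmented last component, and iterate. Summing the rank-one pieces across $l$ then reassembles $\mathbf{M}\succeq 0$, which combined with the diagonal $k=n$ contribution yields $F \ge 0$. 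This level-by-level balancing, together with the strict kernel monotonicity supplied by $\mathbf{A2}$, is the delicate core of the argument.
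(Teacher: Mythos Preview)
The paper does not supply its own proof of this lemma; the statement is quoted verbatim as \cite[Corollary~2.3]{Liao-2order} and no argument is given. So there is nothing in the paper to compare your proposal against beyond the bare citation.

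As for your proposal itself, the preliminary reductions are all correct: the compact kernel form, the identity $\nabla_\tau(\|v^k\|^2)=\langle w_k,v^k+v^{k-1}\rangle$, the computation of $v^{n-\theta}-\tfrac12(v^k+v^{k-1})$ in both cases $k=n$ and $k<n$, and the resulting expression of $F$ as a quadratic form in $(w_1,\dots,w_n)$ are accurate. Your observation that a crude Schur-complement bound on the $w_n$-couplings blows up as $\theta\to\tfrac12$ is also correct and is indeed the reason this inequality is not trivial.

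However, your proposal is not a self-contained proof: at the decisive step---controlling the cross terms with $w_n$---you explicitly defer to \cite[Corollary~2.3]{Liao-2order} and only describe in words (``refactor $\mathbf{M}$ level by level'') what you believe that reference does. Since the paper itself merely cites the same corollary, your write-up and the paper are on equal footing: both rely on the external reference for the substance. If the goal was to supply an independent argument, the gap is precisely the absorption of the $(1-\theta)A_{n-k}^{(n)}\langle w_k,w_n\rangle$ terms into the available diagonal mass, which you have identified but not carried out. One minor caution: you invoke property $\mathbf{A2}$ as ``a consequence of the mesh restriction $\mathbf{M1}$'', but in the present paper the kernels $A_{n-k}^{(n)}$ are the SOE-modified ones, and $\mathbf{A2}$ for those also requires the tolerance condition on $\epsilon$ (Lemma~2.7); the cited Corollary in \cite{Liao-2order} is stated for the unmodified Alikhanov kernels, for which $\mathbf{M1}$ alone suffices.
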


According to \cite[Theorem 3.1 and Remark 1]{Adiscrete}, we have the following lemma.
\begin{lemma}\label{FGi} 
Let the assumptions $\mathbf{A1}$ and ${\bf A2}$ hold, and let $(\xi^n)_{n=1}^N$ be a given nonnegative sequence. Then, for any nonnegative sequence $(v^k)_{k=0}^N$ such that 
$$\sum_{k=1}^n A_{n-k}^{(n)} \nabla_\tau (v^k)^2\leq \sum_{k=1}^n\lambda_{n-k} (v^{k-\theta})^2+v^{n-\theta}\xi^n,\quad \mbox{for}~1\leq n\leq N, $$
it holds that
$$v^n\leq v^0+\mathop{\max}\limits_{1\leq k \leq n}\sum_{j=1}^{k}P_{k-j}^{(k)}\xi^j\leq v^0+\dfrac{11}{4}\Gamma(1-\alpha)\max\limits_{1\leq j \leq n}\{t_j^\alpha\xi^j\},\quad \mbox{ for }~ 1\leq n\leq N.$$
\end{lemma}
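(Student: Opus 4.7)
The plan is to invert the discrete Caputo operator on the left-hand side by convolving the hypothesis with the complementary kernels $P^{(n)}_{n-m}$, and then close the resulting quadratic estimate through a maximum-principle argument of Liao--McLean--Zhang type. First I would multiply the assumed inequality at each level $m$ by $P^{(n)}_{n-m}\ge 0$ and sum $m=1,\dots,n$. Swapping the order of summation on the left and using the defining identity $\sum_{m=k}^n P^{(n)}_{n-m}A^{(m)}_{m-k}\equiv 1$ collapses the left-hand side by telescoping to $(v^n)^2-(v^0)^2$; positivity of $P^{(n)}_{n-m}$ follows from $\mathbf{A2}$ and makes the subsequent monotone bounds on the right-hand side legitimate.

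Next I would process the right-hand side. The source term produces $\sum_{m=1}^n P^{(n)}_{n-m}v^{m-\theta}\xi^m$; bounding $v^{m-\theta}=\theta v^{m-1}+(1-\theta)v^m\le v^{n_\star}$ with $v^{n_\star}:=\max_{0\le j\le n}v^j$ specializes it, after evaluating the whole estimate at the index $n_\star$, to $v^{n_\star}\sum_{m=1}^{n_\star}P^{(n_\star)}_{n_\star-m}\xi^m$. The quadratic perturbation $\sum_k\lambda_{n-k}(v^{k-\theta})^2$ becomes, after the analogous swap, $\sum_k\Lambda^{(n)}_k(v^{k-\theta})^2$ with $\Lambda^{(n)}_k:=\sum_{m=k}^n P^{(n)}_{n-m}\lambda_{m-k}\ge 0$, and absorbing it into the left-hand side (this is where the lower bound $\mathbf{A1}$ forces $\sum_k\Lambda^{(n)}_k$ to be controllable uniformly in $n$) yields a linear inequality $(v^{n_\star})^2\le (v^0)^2+v^{n_\star}\Xi_n$ with $\Xi_n:=\max_{1\le k\le n}\sum_{j=1}^k P^{(k)}_{k-j}\xi^j$. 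Dividing through and using $v^n\le v^{n_\star}$ gives the first claimed bound. For the explicit bound, I would invoke \eqref{Psub} with $m=0$: writing $\xi^j=\Gamma(1-\alpha)\,\omega_{1-\alpha}(t_j)\,t_j^{\alpha}\xi^j$ and pulling $\max_j\{t_j^\alpha\xi^j\}$ out of the sum gives $\Xi_n\le \tfrac{11}{4}\Gamma(1-\alpha)\max_{1\le j\le n}\{t_j^\alpha\xi^j\}$.

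The main obstacle I expect is the absorption step: one must verify that $\sum_k\Lambda^{(n)}_k$ stays uniformly strictly below some constant less than one, so that $(v^{n_\star})^2\bigl(1-\sum_k\Lambda^{(n_\star)}_k\bigr)$ can be reinterpreted as the left-hand side of a genuinely linear-in-$v^{n_\star}$ inequality, rather than an asymptotic one. Establishing this bound under only $\mathbf{A1}$--$\mathbf{A2}$ and a mild structural assumption on $\lambda_{n-k}$ is precisely the content of \cite[Theorem 3.1]{Adiscrete}; once that estimate is in hand, the rest of the argument is a routine application of the complementary-kernel machinery already reviewed in the excerpt.
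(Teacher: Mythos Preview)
Your sketch is correct and is precisely the Liao--McLean--Zhang argument from \cite{Adiscrete} that the paper invokes; note that the paper does not supply its own proof of this lemma but simply cites \cite[Theorem~3.1 and Remark~1]{Adiscrete}, so your outline in fact provides more detail than the paper itself. One remark: the $\lambda$-term in the hypothesis is vestigial here---in the paper's only applications (Theorems~\ref{Stability} and~\ref{Convergence}) the inequality is used with $\lambda_{n-k}\equiv 0$, so the absorption obstacle you flag never arises and the stated conclusion (with no Mittag--Leffler prefactor) is exactly what is needed.
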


To show the stability and convergence of the proposed scheme, we first discuss some main properties of the matrices in \eqref{matrix-form}.
\begin{lemma}(\cite{Laub})\label{numerical-range}
Let symmetric matrix ${\cal M}\in {\mathbb R}^{m\times m}$ with eigenvalues $\lambda_1\geq \lambda_2\geq\cdots\geq \lambda_m$. Then for all ${\bf w}\in {\mathbb R}^{m\times 1}$,
$$\lambda_m{\bf w}^T{\bf w}\leq {\bf w}^T{\cal M}{\bf w}\leq \lambda_1{\bf w}^T{\bf w}.$$
\end{lemma}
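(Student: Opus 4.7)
The plan is to reduce the inequality to a diagonal comparison via the spectral theorem for real symmetric matrices. First I would invoke the fact that any symmetric ${\cal M}\in\mathbb{R}^{m\times m}$ admits an orthogonal diagonalization ${\cal M}=Q\Lambda Q^T$, where $Q\in\mathbb{R}^{m\times m}$ is orthogonal (its columns forming an orthonormal eigenbasis of ${\cal M}$) and $\Lambda=\mathrm{diag}(\lambda_1,\lambda_2,\ldots,\lambda_m)$ with the eigenvalues arranged in the prescribed decreasing order.

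Next I would introduce the change of variable ${\bf y}:=Q^T{\bf w}\in\mathbb{R}^{m\times 1}$. Orthogonality of $Q$ (i.e.\ $QQ^T=Q^TQ=I$) gives the norm-preserving identity ${\bf y}^T{\bf y}={\bf w}^T Q Q^T{\bf w}={\bf w}^T{\bf w}$, and a direct substitution rewrites the quadratic form as
$$
{\bf w}^T{\cal M}{\bf w}={\bf w}^T Q\Lambda Q^T{\bf w}={\bf y}^T\Lambda{\bf y}=\sum_{i=1}^m\lambda_i\,y_i^2.
$$

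The two-sided bound then follows by a term-by-term comparison: since $\lambda_m\leq\lambda_i\leq\lambda_1$ for every $i$ and each $y_i^2\geq 0$, I would conclude
$$
\lambda_m\sum_{i=1}^m y_i^2\;\leq\;\sum_{i=1}^m\lambda_i y_i^2\;\leq\;\lambda_1\sum_{i=1}^m y_i^2,
$$
and substituting back $\sum_i y_i^2={\bf y}^T{\bf y}={\bf w}^T{\bf w}$ delivers the claimed inequality. There is no genuine obstacle here; the only substantive ingredient is the spectral theorem, after which the argument is a one-line diagonal estimate. If anything warrants care it is recording the convention that $Q$ is orthogonal (not merely invertible), so that the change of variable preserves the Euclidean norm and the enveloping factor ${\bf w}^T{\bf w}$ on both sides is not scaled.
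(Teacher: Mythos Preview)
Your proof is correct and is the standard Rayleigh-quotient argument via the spectral theorem. The paper itself does not prove this lemma; it simply cites it from \cite{Laub}, so there is no in-paper proof to compare against.
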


\begin{lemma}\label{HTH}
	The matrix $H^TH$ satisfies $\frac5{12}{\bf w}^T{\bf w}\leq{\bf w}^TH^TH{\bf w}\leq {\bf w}^T{\bf w}$ for any vector ${\bf w}$.
\end{lemma}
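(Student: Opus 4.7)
The plan is to compute the symmetric matrix $H^TH$ entry by entry and apply Gershgorin's disc theorem, then invoke Lemma \ref{numerical-range} to translate the eigenvalue bounds into the claimed quadratic-form bounds. Under the natural mesh restriction $h|b|\leq 2a$ I would write $H$ as tridiagonal with diagonal $\frac{5}{6}$, nonnegative subdiagonal $\beta:=\frac{1}{12}-\frac{hb}{24a}$ and superdiagonal $\alpha:=\frac{1}{12}+\frac{hb}{24a}$, which satisfy the key identity $\alpha+\beta=\frac{1}{6}$. A direct multiplication then gives that $H^TH$ is symmetric pentadiagonal, with generic (sufficiently interior) diagonal entry $\alpha^2+\beta^2+\frac{25}{36}$, nearest off-diagonal entries $\frac{5}{6}(\alpha+\beta)=\frac{5}{36}$, and next off-diagonal entries $\alpha\beta$.

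For a sufficiently interior row the Gershgorin disc has centre $\alpha^2+\beta^2+\frac{25}{36}$ and radius $\frac{10}{36}+2\alpha\beta$. Using the identity $(\alpha+\beta)^2=\frac{1}{36}$, the upper endpoint simplifies to exactly $(\alpha+\beta)^2+\frac{35}{36}=1$, and the lower endpoint to $(\alpha-\beta)^2+\frac{15}{36}\geq\frac{5}{12}$, producing precisely the stated bounds. For the near-boundary rows of $H^TH$ (rows $i=1,2,M-2,M-1$, where some $\pm 1$ or $\pm 2$ neighbours are absent) I would recompute the disc endpoints directly; using $\alpha,\beta\leq\frac{1}{6}$ together with the auxiliary identity $\alpha^2+\beta^2-\alpha\beta=(\alpha-\beta)^2+\alpha\beta\geq 0$, each such disc still lies inside $[\frac{5}{12},1]$ with room to spare. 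Assembling the disc estimates from all rows and invoking Lemma \ref{numerical-range} then yields both inequalities on ${\bf w}^TH^TH{\bf w}$.

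The main technical point I anticipate is the bookkeeping for the boundary rows, since the tight algebraic identity $(\alpha+\beta)^2+\frac{35}{36}=1$ driving the clean interior Gershgorin bound is only an inequality there, so one must verify that the perturbations stay within the allotted slack. However, each boundary correction is an explicit polynomial in $\alpha,\beta$ with absolute value bounded by $\frac{1}{36}$, so I expect the case analysis to reduce to straightforward arithmetic rather than any delicate estimation.
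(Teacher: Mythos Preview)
Your approach is correct and shares the paper's overall strategy---compute $H^TH$ explicitly, bound its eigenvalues via Gershgorin discs, and translate to quadratic-form bounds through Lemma~\ref{numerical-range}---but the execution differs in one useful way. The paper, after writing out $H^TH$ with explicit numerical entries, introduces two auxiliary splittings $H^TH=H_1+H_2=H_3+H_4$, where $H_2,H_4$ are diagonal and $H_1,H_3$ are pentadiagonal matrices engineered (via Gershgorin) to be positive, respectively negative, semidefinite; the bounds $5/12$ and $1$ then come from the diagonal parts. Your route is more direct: with the parameterisation $\alpha+\beta=\tfrac16$, the Gershgorin disc of a generic row of $H^TH$ itself already collapses algebraically to $[(\alpha-\beta)^2+\tfrac{15}{36},\,(\alpha+\beta)^2+\tfrac{35}{36}]=[\,\tfrac{5}{12}+(\alpha-\beta)^2,\,1\,]$, so no splitting is needed. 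This buys a shorter and more transparent argument; the paper's splitting, on the other hand, makes the ``small $h$'' requirement slightly less visible (it is hidden in the diagonal dominance of $H_1,\dots,H_4$), whereas your explicit hypothesis $h|b|\le 2a$ is exactly what forces $\alpha,\beta\ge 0$ and hence $\alpha\beta\ge 0$ in the radius computation. Your anticipated boundary bookkeeping is routine: for row~$1$ the disc is $[\tfrac{20}{36}+\beta(\beta-\alpha),\,\tfrac{30}{36}+\tfrac{\beta}{6}]\subset[\tfrac{5}{12},1]$, for row~$2$ it is $[\tfrac{5}{12}+(\alpha-\beta)^2+\alpha\beta,\,1-\alpha\beta]$, and rows $M-2,M-1$ follow by swapping $\alpha\leftrightarrow\beta$.
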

\begin{proof}
Denote
\begin{eqnarray}\nonumber
B:=\left[
\begin{array}{cccccc}
10 &1 & & & \\
1 &10 &1 && \\
& \ddots &\ddots&\ddots& \\
&& 1& 10& 1\\
&&&1 &10\\
\end{array}
\right].
\end{eqnarray}
Then $H=\dfrac{1}{12}A+\dfrac{b}{24a}hS+I=\dfrac{1}{12}B++\dfrac{b}{24a}hS$, and
\begin{align*}
{H}^TH=&\left(\dfrac{1}{12}B+\dfrac{b}{24a}hS^T\right) \left(\dfrac{1}{12}B+\dfrac{b}{24a}hS\right)\\
=&\displaystyle\frac{1}{144}B^2+\frac{b}{288a}hBS+\frac{b}{288a}hS^TB+\frac{b^2}{576a^2}h^2S^TS\\
=&\displaystyle\frac{1}{144}\left[ B^2+\frac{b}{2a}h(BS+S^TB)+\frac{b^2}{4a^2}h^2S^TS\right].
\end{align*}
Noticing that
\begin{eqnarray}\label{BS/ST}
BS+S^TB=\left[
\begin{array}{ccccc}
-2 & & & & \\
&0 & & & \\
& &\ddots & & \\
&&&0 & \\
&&&&2 \\
\end{array}
\right],
\quad
S^TS=\left[
\begin{array}{cccccc}
1 &0 &-1 & & & \\
0 &2 &0 &-1 & & \\
-1 &0 &2 &0 &-1 & \\
& \ddots &\ddots &\ddots &\ddots & \\
&&-1 &0 &2 &0 \\
&&&-1 &0 &1 \\
\end{array}
\right],
\end{eqnarray}
therefore,
\begin{eqnarray}\nonumber
H^TH=\frac{1}{144}\left[
\begin{array}{cccccc}
c_2 &20 &1-c_1h^2 & & & \\
20 &102+2c_1h^2 &20 &1-c_1h^2 & & \\
1-c_1h^2 &20 &102+2c_1h^2 &20 &1-c_1h^2 & \\
& \ddots &\ddots &\ddots &\ddots & \\
&&1-c_1h^2 &20 &102+2c_1h^2 &20 \\
&&&1-c_1h^2 &20 &c_3 \\
\end{array}
\right],
\end{eqnarray}
where $c_1=\dfrac{b^2}{4a^2}$, $c_2=101-\dfrac{b}{a}h+c_1h^2$, $c_3=101+\dfrac{b}{a}h+c_1h^2$. It is easy to check that $H^TH$ is diagonally dominant.

Next, we divide $H^TH$ into two parts to study its numerical range:
\begin{align*}
&H^TH=\frac{1}{144}\left[
\begin{array}{cccccc}
21-c_1h^2 &20 &1-c_1h^2 & & & \\
20 &41-c_1h^2 &20 &1-c_1h^2 & & \\
1-c_1h^2 &20 &42-2c_1h^2 &20 &1-c_1h^2 & \\
& \ddots &\ddots &\ddots &\ddots & \\
&&1-c_1h^2 &20 &41-c_1h^2 &20 \\
&&&1-c_1h^2 &20 &21-c_1h^2 \\
\end{array}
\right]\\
&{\small+\frac{1}{144}\left[
\begin{array}{ccccccc}
80-\dfrac{b}{a}h+2c_1h^2 & & & & &&\\
&61+3c_1h^2 & & & &&\\
& &60+4c_1h^2 & & &&\\
&&&\ddots & && \\
&&&&60+4c_1h^2 && \\
&&&&&61+3c_1h^2& \\
&&&&&&80+\dfrac{b}{a}h+2c_1h^2 \\
\end{array}
\right] }\\
&:=H_1+H_2,
\end{align*}
For small $h$, by using the Gershgorin\textquoteright s circle theorem, it is easy to know that $\lambda_{\min}(H_1)\geq 0$ and $\lambda_{\min}(H_2)\geq 60/144=5/12$. By Lemma \ref{numerical-range}, for any vector ${\bf w}$, it holds
$${\bf w}^TH^TH{\bf w}={\bf w}^TH_1{\bf w}+{\bf w}^TH_2{\bf w}\geq \frac5{12}{\bf w}^T{\bf w}.$$
Similarly, we have the decomposition
\begin{align*}
H^TH=&\frac{1}{144}\left[
\begin{array}{cccccc}
-21+c_1h^2 &20 &1-c_1h^2 & & & \\
20 &-41+c_1h^2 &20 &1-c_1h^2 & & \\
1-c_1h^2 &20 &-42+2c_1h^2 &20 &1-c_1h^2 & \\
& \ddots &\ddots &\ddots &\ddots & \\
&&1-c_1h^2 &20 &-41+c_1h^2 &20 \\
&&&1-c_1h^2 &20 &-21+c_1h^2 \\
\end{array}
\right]\\
&+\frac{1}{144}\left[
\begin{array}{ccccccc}
122-\dfrac{hb}{a} & & & & &&\\
&143+c_1h^2 & & & &&\\
& &144 & & &&\\
&&&\ddots & && \\
&&&&144 && \\
&&&&&143+c_1h^2 & \\
&&&&&&122+\dfrac{hb}{a} \\
\end{array}
\right] \\
:=&H_3+H_4.
\end{align*}
The Gershgorin\textquoteright s circle theorem gives $\lambda_{\max}(H_3)\leq0$ and $\lambda_{\max}(H_4)\leq 144/144=1$ for small $h$, which leads to
$${\bf w}^TH^TH{\bf w}={\bf w}^TH_3{\bf w}+{\bf w}^TH_4{\bf w}\leq {\bf w}^T{\bf w}.$$
\end{proof}

\begin{lemma}\label{HA-HS}
The matrices $H^TA+AH$ and $\dfrac{a}{h^2}(H^TA+AH)+\dfrac{b}{2h}(H^TS+S^TH)$ are negative semi-definite.
\end{lemma}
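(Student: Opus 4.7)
The plan is to reduce both claims to direct quadratic-form inequalities. I will exploit $A^T = A$ and $S^T = -S$ to write the two target matrices in closed form, identify the commutator $AS - SA$ explicitly as a diagonal boundary object, and then absorb the positive-semidefinite pieces into multiples of the Dirichlet energy $-\bw^T A \bw$ using a short list of elementary inequalities.

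Using $H = I + \tfrac{1}{12}A + \tfrac{bh}{24a}S$ and $H^T = I + \tfrac{1}{12}A - \tfrac{bh}{24a}S$, a direct expansion yields
$H^TA + AH = 2A + \tfrac{1}{6}A^2 + \tfrac{bh}{24a}(AS - SA)$
and
$H^TS + S^TH = \tfrac{1}{12}(AS - SA) - \tfrac{bh}{12a}S^2,$
so that the second target matrix simplifies to $\tfrac{2a}{h^2}A + \tfrac{a}{6h^2}A^2 + \tfrac{b}{12h}(AS - SA) - \tfrac{b^2}{24a}S^2$. A short row-by-row computation shows that $AS - SA$ is purely a boundary object: its only nonzero entries are $(AS-SA)_{1,1} = -2$ and $(AS-SA)_{M-1,M-1} = 2$, because in the interior the two products coincide and only the one-sided structure of the operators at the Dirichlet endpoints leaves a residue. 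Hence $\bw^T(AS - SA)\bw = 2(w_{M-1}^2 - w_1^2)$.

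Three elementary inequalities are then the workhorses. Since the eigenvalues of $A$ lie in $(-4, 0)$, the operator inequality $A^2 \preceq -4A$ gives $\|A\bw\|^2 \leq -4\bw^T A\bw$. Writing $(S\bw)_i = (w_{i+1} - w_i) + (w_i - w_{i-1})$ and using $(p+q)^2 \leq 2p^2 + 2q^2$ produces $\|S\bw\|^2 \leq -4\bw^T A\bw$. Finally, since $w_0 = w_M = 0$, one has $w_1^2 + w_{M-1}^2 \leq \sum_{i=0}^{M-1}(w_{i+1}-w_i)^2 = -\bw^T A\bw$, so in particular $|w_{M-1}^2 - w_1^2| \leq -\bw^T A\bw$.

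Assembling the pieces, the first claim follows from
$\bw^T(H^TA + AH)\bw = 2\bw^T A\bw + \tfrac{1}{6}\|A\bw\|^2 + \tfrac{bh}{12a}(w_{M-1}^2 - w_1^2) \leq \left(\tfrac{4}{3} - \tfrac{|b|h}{12a}\right)\bw^T A\bw \leq 0$
for $h$ small enough (recall $\bw^T A\bw \leq 0$). For the second claim, noting $-S^2 = S^TS$ so that $\bw^T(-S^2)\bw = \|S\bw\|^2$, the bilinear form equals
$\tfrac{2a}{h^2}\bw^T A\bw + \tfrac{a}{6h^2}\|A\bw\|^2 + \tfrac{b}{6h}(w_{M-1}^2 - w_1^2) + \tfrac{b^2}{24a}\|S\bw\|^2,$
and the three bounds above combine to give the upper estimate $\left(\tfrac{4a}{3h^2} - \tfrac{|b|}{6h} - \tfrac{b^2}{6a}\right)\bw^T A\bw$, which is $\leq 0$ once $h$ is small (the $h^{-2}$ term dominates the lower-order corrections). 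The main obstacle will be the entrywise identification of $AS - SA$: the interior cancellation is clean, but the boundary rows $i = 1$ and $i = M-1$ must be checked carefully to pin down the $\pm 2$ diagonal residue, since the entire quadratic-form reduction hinges on expressing $\bw^T(AS - SA)\bw$ through boundary values alone.
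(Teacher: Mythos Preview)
Your proof is correct, but it takes a genuinely different route from the paper. The paper writes out $H^TA+AH$ and $\frac{a}{h^2}(H^TA+AH)+\frac{b}{2h}(H^TS+S^TH)$ entry by entry as explicit pentadiagonal matrices, observes that each is symmetric and (for small $h$) diagonally dominant with negative diagonal, and then invokes Gershgorin's circle theorem to conclude that all eigenvalues are $\leq 0$. Your approach instead keeps everything at the operator level: you decompose each target matrix into $A$, $A^2$, $AS-SA$, and $S^TS=-S^2$, identify $AS-SA$ as the diagonal boundary matrix $\mathrm{diag}(-2,0,\ldots,0,2)$ (which indeed matches the paper's computation of $BS+S^TB$ in the proof of the preceding lemma, since $B=A+12I$ gives $BS+S^TB=AS-SA$), and then dominate each quadratic form by a multiple of the Dirichlet energy $-\bw^TA\bw$ via the three elementary inequalities $A^2\preceq -4A$, $\|S\bw\|^2\leq -4\bw^TA\bw$, and $w_1^2+w_{M-1}^2\leq -\bw^TA\bw$.

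Both arguments need $h$ small to absorb the $O(h)$ boundary correction, and both are rigorous. The paper's Gershgorin argument is concrete and checkable by inspection once the matrices are written out; your energy-based argument is more structural, never requires displaying the full pentadiagonal pattern, and makes transparent exactly which spectral facts about $A$ and $S$ drive the result. Your approach would also adapt more readily to related operators or higher dimensions, whereas the paper's has the virtue of being a single direct computation.
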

\begin{proof}
Straightforward computations show that
%
\begin{align*}
{H}^TA+AH=&\left(\dfrac{1}{12}B+\dfrac{b}{24a}hS^T\right) A+A\left(\dfrac{1}{12}B+\dfrac{b}{24a}hS\right)\\
=&\frac{1}{6}\left[
\begin{array}{cccccc}
-19-\dfrac{b}{2a}h &8 &1 & & & \\
8 &-18 &8 &1 & & \\
1 &8 &-18 &8 &1 & \\
& \ddots &\ddots &\ddots &\ddots & \\
&&1 &8 &-18 &8 \\
&&&1 &8 &-19+\dfrac{b}{2a}h \\
\end{array}
\right].
\end{align*}
So ${H}^TA+AH$ is diagonally dominant with small $h$. By similar arguments to the proof of Lemma \ref{HTH}, we can check that $\lambda_{\max}({H}^TA+AH)\leq 0$ which leads to first part of the desired result.
%

For the second part, we notice that
\begin{align*}
&\frac{a}{h^2}(H^TA+AH)+\frac{b}{2h}(H^TS+S^TH)\\
=&\frac{1}{6h^2}\left[
\begin{array}{cccccc}
c_4 &8a &a-ac_1h^2 & & & \\
8a &-18a+2ac_1h^2 &8a &a-ac_1h^2 & & \\
a-ac_1h^2 &8a &-18a+2ac_1h^2 &8a &a-ac_1h^2 & \\
& \ddots &\ddots &\ddots &\ddots & \\
&&a-ac_1h^2 &8a &-18a+2ac_1h^2&8a \\
&&&a-ac_1h^2 &8a & c_5 \\
\end{array}
\right].
\end{align*}
where $c_4=-19a-bh+ac_1h^2, c_5=-19a+bh+ac_1h^2$. One may easy to find that the above matrix is also diagonally dominant. Similarly, it is negative semi-definite.
\end{proof}

We are now ready to display the stability and convergence of our proposed scheme.
\begin{theorem}\label{Stability} (Stability)
If the assumptions $\mathbf{A1}$-${\bf A2}$ hold, then the numerical scheme \eqref{sc2} is stable and satisfies
\begin{align*}
\|u^n\|&\leq \sqrt{\dfrac{12}{5}}\left(\|u^0\|+2\mathop{\max}\limits_{1\leq k \leq n}\sum_{j=1}^{k}P_{k-j}^{(k)}(\|f^{n-\theta}\|+\|\widehat{f}^{n-\theta} \|)\right)\\
&\leq \sqrt{\dfrac{12}{5}}\left( \|u^0\|+\dfrac{11}{2}\Gamma(1-\alpha)\max\limits_{1\leq j \leq n}\{t_j^\alpha (\|f^{n-\theta}\|+\|\widehat{f}^{n-\theta} \|)\}\right) ,\quad 1\leq n\leq N.
\end{align*}
\end{theorem}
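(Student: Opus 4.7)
The plan is to introduce the auxiliary grid vector $\bv^n:=H\bu^n$, derive an energy estimate for $\|\bv^n\|$ first, and then transfer it back to $\|\bu^n\|$ via the spectral bounds $\sqrt{5/12}\,\|\bw\|\le\|H\bw\|\le\|\bw\|$ coming from Lemma~\ref{HTH}. To this end, I would take the discrete $L^2$ inner product of \eqref{matrix-form} with $H\bu^{n-\theta}$. Since $H$ is time independent, the fractional term becomes $\langle\mathcal{D}_t^{\alpha}\bv^{n-\theta},\bv^{n-\theta}\rangle$, which is now paired symmetrically and is therefore amenable to Lemma~\ref{Du}; the reaction term becomes $c\|\bv^{n-\theta}\|^2$ (nonnegative because $c=r\ge 0$); and the forcing term becomes $\langle\bv^{n-\theta},H\bff^{n-\theta}+\widehat{\bff}^{n-\theta}\rangle$.

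For the spatial contribution on the right, I would use the symmetrization identity $\bx^{T}M\bx=\tfrac{1}{2}\bx^{T}(M+M^{T})\bx$ together with $S^{T}=-S$ to rewrite the quadratic form as
\[
\tfrac{h}{2}(\bu^{n-\theta})^{T}\bigl[(\tfrac{a}{h^{2}}+\tfrac{b^{2}}{12a})(H^{T}A+AH)+\tfrac{b}{2h}(H^{T}S-SH)\bigr]\bu^{n-\theta}.
\]
Splitting the coefficient $\tfrac{a}{h^{2}}+\tfrac{b^{2}}{12a}$ and invoking both statements of Lemma~\ref{HA-HS} shows that this quadratic form is non-positive and can be discarded from the inequality.

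Combining the two sides, then applying Cauchy--Schwarz together with the upper bound $\|H\bw\|\le\|\bw\|$ of Lemma~\ref{HTH}, yields $\langle\mathcal{D}_t^{\alpha}\bv^{n-\theta},\bv^{n-\theta}\rangle\le\|\bv^{n-\theta}\|(\|\bff^{n-\theta}\|+\|\widehat{\bff}^{n-\theta}\|)$. Lemma~\ref{Du} then upgrades this to
\[
\sum_{k=1}^{n}A_{n-k}^{(n)}\nabla_{\tau}\|\bv^{k}\|^{2}\le 2\|\bv^{n-\theta}\|\bigl(\|\bff^{n-\theta}\|+\|\widehat{\bff}^{n-\theta}\|\bigr).
\]
Since the triangle inequality gives $\|\bv^{n-\theta}\|\le\theta\|\bv^{n-1}\|+(1-\theta)\|\bv^{n}\|$, the scalar sequence $\{\|\bv^{k}\|\}$ satisfies the hypothesis of Lemma~\ref{FGi} with $\lambda\equiv 0$ and $\xi^{k}=2(\|\bff^{k-\theta}\|+\|\widehat{\bff}^{k-\theta}\|)$, and that lemma produces the bound on $\|\bv^{n}\|$. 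Finally the lower bound $\|\bu^{n}\|\le\sqrt{12/5}\,\|\bv^{n}\|$ and the upper bound $\|\bv^{0}\|\le\|\bu^{0}\|$, both from Lemma~\ref{HTH}, yield the first stated inequality; the second form follows immediately by applying \eqref{Psub} with $m=0$ to the discrete convolution $\sum_{j=1}^{k}P_{k-j}^{(k)}\xi^{j}$, producing the factor $\tfrac{11}{4}\Gamma(1-\alpha)$.

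The main technical obstacle is identifying the correct test vector. Testing with the natural choice $\bu^{n-\theta}$ would fail to give a symmetric pairing for the Caputo term (one copy of $H$ would be left unpaired), and the spatial quadratic form would not land in the combination symmetrized by Lemma~\ref{HA-HS}. The simultaneous appearance of $H$ on both the fractional-derivative side and the convective/diffusive side is precisely what the test vector $H\bu^{n-\theta}$ is designed to reconcile: it turns the time term into $\langle\mathcal{D}_t^{\alpha}\bv^{n-\theta},\bv^{n-\theta}\rangle$ for Lemma~\ref{Du} and makes the spatial quadratic form match the exact combination shown to be negative semi-definite in Lemma~\ref{HA-HS}. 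Once this choice is in place, the remainder is a routine assembly of the previously established lemmas.
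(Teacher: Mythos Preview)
Your proposal is correct and follows essentially the same route as the paper's proof: test \eqref{matrix-form} with $H\bu^{n-\theta}$, discard the spatial quadratic form via the splitting $\bigl(\tfrac{a}{h^{2}}+\tfrac{b^{2}}{12a}\bigr)(H^{T}A+AH)+\tfrac{b}{2h}(H^{T}S+S^{T}H)=\tfrac{b^{2}}{12a}(H^{T}A+AH)+\bigl[\tfrac{a}{h^{2}}(H^{T}A+AH)+\tfrac{b}{2h}(H^{T}S+S^{T}H)\bigr]$ and Lemma~\ref{HA-HS}, bound the forcing by Cauchy--Schwarz and Lemma~\ref{HTH}, and then invoke Lemmas~\ref{Du} and~\ref{FGi} before converting the estimate on $\|\bv^n\|$ back to $\|\bu^n\|$ via the spectral bounds $\tfrac{5}{12}\|\bu^n\|^2\le\|\bv^n\|^2\le\|\bu^n\|^2$. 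Your identification of the test vector and the role of each lemma matches the paper exactly; only the notation $H^{T}S-SH$ versus the paper's $H^{T}S+S^{T}H$ differs, and these coincide since $S^{T}=-S$.
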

\begin{proof}
Multiplying both sides of \eqref{matrix-form} with $h({\bf u}^{n-\theta})^TH^T$, one has
\begin{equation}\label{stab1}
\begin{array}{l} \vspace{2mm}
\displaystyle h({\bf u}^{n-\theta})^TH^TH\mathcal{D}_\tau^{\alpha}{\bf u}^{n-\theta}+ch({\bf u}^{n-\theta})^TH^TH{\bf u}^{n-\theta}-h({\bf u}^{n-\theta})^TH^TH{\bf f}^{n-\theta}-h({\bf u}^{n-\theta})^TH^T\widehat{\bf f}^{n-\theta}\\
=\displaystyle\left(\frac{a}{h^2}+\frac{b^2}{12a}\right)h({\bf u}^{n-\theta})^TH^TA {\bf u}^{n-\theta}+\frac{b}{2h}h({\bf u}^{n-\theta})^TH^TS{\bf u}^{n-\theta}.
\end{array}
\end{equation}
According to Lemmas \ref{numerical-range} and \ref{HA-HS}, the right-hand side of \eqref{stab1} satisfies
\begin{align}\nonumber
&\left(\frac{a}{h^2}+\frac{b^2}{12a}\right)h({\bf u}^{n-\theta})^TH^TA {\bf u}^{n-\theta}+\frac{b}{2h}h({\bf u}^{n-\theta})^TH^TS{\bf u}^{n-\theta}\\\label{stab2}
=&
\frac{h}{2}\left(\frac{a}{h^2}+\frac{b^2}{12a}\right)({\bf u}^{n-\theta})^T(H^TA+AH){\bf u}^{n-\theta}+\frac{b}{4}({\bf u}^{n-\theta})^T(H^TS+S^TH){\bf u}^{n-\theta}\leq 0,
\end{align}
where the identity ${\bf u}^TW{\bf u}=\frac12 {\bf u}^T(W+W^T){\bf u}$ has been applied.

 From \eqref{stab1}--\eqref{stab2}, and utilizing Lemma \ref{HTH} and the Cauchy-Schwarz inequality, we get
\begin{align*}&h({\bf u}^{n-\theta})^TH^TH\displaystyle \mathcal{D}_t^{\alpha}{\bf u}^{n-\theta}\\
\leq& h({\bf u}^{n-\theta})^TH^TH{\bf f}^{n-\theta}+h({\bf u}^{n-\theta})^TH^T\widehat{\bf f}^{n-\theta}\\
\leq& h({\bf v}^{n-\theta})^T(H{\bf f}^{n-\theta})+h({\bf v}^{n-\theta})^T\widehat{\bf f}^{n-\theta}\\
\leq& \sqrt{h({\bf v}^{n-\theta})^T({\bf v}^{n-\theta})}\sqrt{h(H{\bf f}^{n-\theta})^T(H{\bf f}^{n-\theta})}+\sqrt{h({\bf v}^{n-\theta})^T({\bf v}^{n-\theta})}\sqrt{h(\widehat{\bf f}^{n-\theta})^T(\widehat{\bf f}^{n-\theta})}\\
\leq& \sqrt{h({\bf v}^{n-\theta})^T({\bf v}^{n-\theta})}\sqrt{h({\bf f}^{n-\theta})^T({\bf f}^{n-\theta})}+\sqrt{h({\bf v}^{n-\theta})^T({\bf v}^{n-\theta})}\sqrt{h(\widehat{\bf f}^{n-\theta})^T(\widehat{\bf f}^{n-\theta})},
\end{align*}
where ${\bf v}^{n-\theta}=H{\bf u}^{n-\theta}$, and it leads to (by taking $v_i^n={\cal H}u_i^n$)
\begin{align}\label{stb3}
\langle  (\mathcal{D}_\tau^{\alpha} v)^{n-\theta},v^{n-\theta}\rangle\leq \|v^{n-\theta} \|(\|f^{n-\theta} \|+\|\widehat{f}^{n-\theta} \|)\leq \left(\theta\|v^{n-1}\|+(1-\theta)\|v^n\|\right)(\|f^{n-\theta} \|+\|\widehat{f}^{n-\theta} \|).
\end{align}
Hence, it follows from Lemmas \ref{Du} and \ref{FGi}, and \eqref{stb3} that
\begin{align*}
\|v^n\|\leq& \|v^0\|+2\mathop{\max}\limits_{1\leq k \leq n}\sum_{j=1}^{k}P_{k-j}^{(k)}(\|f^{n-\theta}\|+\|\widehat{f}^{n-\theta} \|)\\
\leq& \|v^0\|+\dfrac{11}{2}\Gamma(1-\alpha)\max\limits_{1\leq j \leq n}\{t_j^\alpha (\|f^{n-\theta}\|+\|\widehat{f}^{n-\theta} \|)\},\quad\mbox{ for }~ 1\leq n\leq N.
\end{align*}
Finally, the claimed result can be achieved provided the following bound arising from Lemma \ref{HTH}:
$$\frac{5}{12}\|u^n\|^2 \leq \|v^n\|^2\leq\|u^n\|^2.$$
\end{proof}

\begin{theorem}\label{Convergence} (Convergence)
Let $e_i^n=u(x_i,t_n)-u_i^n$, if $\mathbf{M1}$, $\mathbf{M2}$ and the regularity assumptions \eqref{regularity1}--\eqref{regularity2} hold, then the proposed scheme \eqref{sc2} is convergent with
\begin{align*}
 \|e^n\| \leq C(\tau^{\min\{\gamma\sigma,2\}}+h^4),  \quad   1\leq n\leq N.
\end{align*}
\end{theorem}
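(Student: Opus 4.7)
The plan is to convert the scheme's consistency identity into an error equation and feed it into Theorem \ref{Stability}. Subtracting \eqref{sc2} from \eqref{sc1} and noting that the spatial discrete operators on the right-hand side are identical in both identities, the pointwise error $e_i^n = u(x_i,t_n) - u_i^n$ satisfies, in the matrix-vector form paralleling \eqref{matrix-form},
\[
H(\mathcal{D}_t^\alpha e)^{n-\theta} + cHe^{n-\theta} = \Bigl(\frac{a}{h^2} + \frac{b^2}{12a}\Bigr) A e^{n-\theta} + \frac{b}{2h} S e^{n-\theta} + R^{n-\theta},
\]
with $e^0 = 0$ and homogeneous boundary conditions (so the boundary vector $\widehat{f}^{n-\theta}$ is absent). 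Here $R^{n-\theta}$ sits as a raw forcing term, not pre-multiplied by $H$.

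Next I would rerun the proof of Theorem \ref{Stability}. Multiplying by $h(e^{n-\theta})^T H^T$, the negative semi-definiteness (Lemma \ref{HA-HS}) kills the spatial terms exactly as in the stability proof, and the Cauchy--Schwarz step is modified to
\[
h(e^{n-\theta})^T H^T R^{n-\theta} \leq \sqrt{h(v^{n-\theta})^T v^{n-\theta}}\,\sqrt{h(R^{n-\theta})^T R^{n-\theta}},
\]
with $v_i^n := \mathcal{H}e_i^n$. Lemma \ref{Du} and Lemma \ref{FGi} then deliver $\|v^n\| \leq 2\max_{1\leq k\leq n}\sum_{j=1}^k P_{k-j}^{(k)}\|R^{j-\theta}\|$, and the norm equivalence $\|v^n\|^2 \geq \tfrac{5}{12}\|e^n\|^2$ from Lemma \ref{HTH} gives
\[
\|e^n\| \leq C\,\max_{1\leq k\leq n}\sum_{j=1}^k P_{k-j}^{(k)}\|R^{j-\theta}\|.
\]

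Now I would invoke the truncation bound \eqref{PR}, which already packages the spatial $h^4$ error together with the three temporal pieces coming from the nonuniform fast Alikhanov consistency error $R_{t1}$ and the weighted-interpolation error $R_u$. It remains to convert each temporal term into $\tau^{\min\{\gamma\sigma,2\}}$ using assumption M2. The condition $\tau_k \leq C_\gamma \tau\min\{1, t_k^{1-1/\gamma}\}$ immediately gives $\tau_1 \leq C\tau^\gamma$ and hence $\tau_1^\sigma/\sigma \leq C\tau^{\gamma\sigma}$; the other two factors $t_k^\alpha t_{k-1}^{\sigma-3}\tau_k^3/\tau_{k-1}^\alpha$ and $t_n^\alpha t_{k-1}^{\sigma-2}\tau_k^2$ are handled by substituting $\tau_k \leq C\tau t_k^{1-1/\gamma}$, using $t_k \leq C t_{k-1}$ and $\tau_k/t_k \leq C\tau_{k-1}/t_{k-1}$, and distinguishing the two regimes $\gamma\sigma<2$ (where the mesh resolution bounds the error by $\tau^{\gamma\sigma}$) and $\gamma\sigma\geq 2$ (where the Alikhanov order gives $\tau^2$).

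The main obstacle is this last step: the careful mesh-dependent case analysis under M2. While the two preceding steps are essentially mechanical once stability and consistency are in hand, bounding the singularity-weighted ratios uniformly in $k$ for the fast nonuniform Alikhanov formula requires tracking powers of $t_k$, $\tau_k$, and the ratios $\tau_k/\tau_{k-1}$ with some care. Once these estimates are assembled, combining the spatial $Ch^4$ contribution with the temporal $C\tau^{\min\{\gamma\sigma,2\}}$ contribution yields the claimed convergence rate for $\|e^n\|$.
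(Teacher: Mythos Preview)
Your proposal is correct and follows essentially the same route as the paper: derive the error equation, rerun the stability argument of Theorem \ref{Stability} with $R^{n-\theta}$ as the forcing (handled exactly as the boundary term $\widehat{f}^{n-\theta}$ there, since it is not premultiplied by $H$), invoke the truncation bound \eqref{PR}, and then reduce the three temporal pieces via $\mathbf{M2}$. The paper dispatches the last step by citing \cite[eqs.~(3.11) and (3.12)]{Liao-2order} rather than carrying out the case analysis you describe, but otherwise your outline matches the paper's proof.
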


\begin{proof}
We have the following error equation
\begin{align}\label{Errorequation}
 {\cal H} \mathcal{D}_t^{\alpha}e_i^{n-\theta}=\left(a+\frac{h^2b^2}{12a}\right)\delta_x^2{e_i^{n-\theta}}+b\delta_{\hat{x}}e_i^{n-\theta}-c{\cal H}e_i^{n-\theta}+R_i^{n-\theta},\quad x_i\in \Omega_h, ~1\leq n\leq N.
\end{align}
By using analogous derivations to the proof of Theorem \ref{Stability}, and notice \eqref{PR}, we can get
\begin{align*}
\|e^n\| \leq& \sqrt{\dfrac{12}{5}}\left(\|e^0\|+2\max_{1\leq k \leq n}\sum_{j=1}^kP_{k-j}^{(k)}\|R^{j-\theta}\|\right)\\
\leq& C\left({\tau_1^\sigma}+\mathop{\max}\limits_{2\leq k \leq n}t_k^\alpha t_{k-1}^{\sigma-3}\frac{\tau_k^3}{\tau_{k-1}^\alpha}+\mathop{\max}\limits_{2\leq k \leq n}t_{k-1}^{\sigma-2}\tau_k^2 +h^4 \right).
\end{align*}
If the mesh assumption $\mathbf{M2}$ holds, we have $\tau_1\leq C{\tau}^{\gamma}$ and (for $2\leq k\leq n$) 
\begin{align*}
 &\displaystyle t_k^\alpha t_{k-1}^{\sigma-3}\tau_k^3/\tau_{k-1}^\alpha \leq C t_k^{\max\{0,\sigma-(3-\alpha)/\gamma\}}\tau^{\min\{2,\gamma\sigma \}}, \\
& \displaystyle t_{k-1}^{\sigma-2}\tau_k^2\leq C t_k^{\max\{0,\sigma-2/\gamma\}}\tau^{\min\{2,\gamma\sigma \}},
\end{align*}
see also \cite[eqs. (3.11) and (3.12)]{Liao-2order}. Thus, the claimed result follows immediately.
\end{proof}

\section{Numerical Implementations}\label{numerical}
 In this section, we carry out numerical experiments to illustrate our theoretical statements.
 As the experimental merit of the SOE technique has been well demonstrated in many previous works, e.g. \cite{Fast-evaluation,Yan-fast,LiaoYanZhang2018,LyuVongANM2020}, we mainly focus on the accuracy verification, one may refer to \cite{LyuVongANM2020,Yan-fast} for the computational advantage of the fast Alikhanov approximation comparing to the classical approximation while solving the time-fractional partial differential equations.
 In our computations, the special domain is divided uniformly into $M$ subintervals and the time interval is divided by a general nonuniform grid with $N$ parts.

\begin{example}\label{ex1}
We consider that the problem \eqref{GoverningEq} with $\Omega=(0,1)$, $T=1$, the initial condition $u(x,0)= x^3(1-x)^3$, and the source term 
$$f(x,t)=x^3(1-x)^3\left( \frac{t^{1-\alpha}}{\Gamma(2-\alpha)}+\Gamma(\alpha+1)\right)-\left( a\frac{\partial^{2}u}{\partial x^{2}}+b\frac{\partial u}{\partial x}-cu\right)(t^\alpha+t+1)$$
is chosen to such that the exact solution is $u(x,t)=x^3(1-x)^3(t^\alpha+t+1)$.
Here we take $a=0.5$, $b=-0.45$ and $c=0.05$.
\end{example}

The numerical results of the proposed scheme \eqref{sc2} in solving Example 1 are recorded in Tables \ref{table1} and \ref{table2}. In each run, the discrete $L^2$-norm solution error
$$E_2(M,N)=\mathop{\max}\limits_{1\leq n \leq N}\|u(\cdot,t_n)-u^n\|.$$
 The temporal and spatial rate of convergence is estimated respectively by 
 $$Rate_\tau=\log_2\left[ \dfrac{E_2(M,N)}{E_2(M,2N)}\right] \quad \mbox{and} \quad Rate_h=\log_2\left[ \dfrac{E_2(M,N)}{E_2(2M,N)}\right] .$$

To test the sharpness of our error estimate, we choose the graded time mesh $t_k=T(k/N)^\gamma$ with $\gamma=2/\sigma$, one may notice that $\sigma=\alpha$ for this example. From Tables \ref{table1} and \ref{table2}, it is clearly that the proposed numerical method is convergent with second-order accuracy in time and fourth-order accuracy in space, which agrees well with the theoretical statement.

\begin{table}[hbt!]
	\begin{center}
		\caption{Numerical temporal accuracy in temporal direction for fixed $M=1000$ and different $\alpha$.}\label{table1}
		\renewcommand{\arraystretch}{1.0}
		\def\temptablewidth{0.9\textwidth}
		{\rule{\temptablewidth}{0.9pt}}
		\begin{tabular*}{\temptablewidth}{@{\extracolsep{\fill}}ccccccc}
			$N$ &\multicolumn{2}{c}{$\alpha=0.5$}&\multicolumn{2}{c}{$\alpha=0.7$}
			&\multicolumn{2}{c}{$\alpha=0.9$}\\
			\cline{2-3}\cline{4-5}\cline{6-7}
			&$E_2(M,N)$ &$Rate_\tau$  &$E_2(M,N)$  &$Rate_\tau$  &$E_2(M,N)$  &$Rate_\tau$\\\hline
			$8$   &1.1597e-05    & $\ast$  &1.2056e-05     & $\ast$  &5.7101e-06     &$\ast$\\
			$16$   &2.9584e-06    &1.9709   &3.0508e-06     &1.9825   &1.4290e-06     &1.9985 \\
			$32$  &7.5167e-07    &1.9766   &7.7019e-07     &1.9859   &3.5783e-07     &1.9977 \\
			$64$  &1.9016e-07    &1.9829   &1.9400e-07     &1.9892   &8.9585e-08     &1.9979 \\
			$128$  &4.7827e-08    &1.9913   &4.8775e-08     &1.9918   &2.2423e-08     &1.9983 \\
		\end{tabular*}
		{\rule{\temptablewidth}{0.9pt}}
	\end{center}
\end{table}

\begin{table}[hbt!]
	\begin{center}
		\caption{Numerical spatial accuracy in spatial direction for fixed $N=2000$ and different $\alpha$.}\label{table2}
		\renewcommand{\arraystretch}{1.0}
		\def\temptablewidth{0.9\textwidth}
		{\rule{\temptablewidth}{0.9pt}}
		\begin{tabular*}{\temptablewidth}{@{\extracolsep{\fill}}ccccccc}
			$M$ &\multicolumn{2}{c}{$\alpha=0.5$}&\multicolumn{2}{c}{$\alpha=0.7$}
			&\multicolumn{2}{c}{$\alpha=0.9$}\\
			\cline{2-3}\cline{4-5}\cline{6-7}
			&$E_2(M,N)$ &$Rate_h$  &$E_2(M,N)$  &$Rate_h$  &$E_2(M,N)$  &$Rate_h$\\\hline
			$4$   &2.7475e-03    & $\ast$  &2.7658e-03     & $\ast$  &2.7897e-03     &$\ast$\\
			$8$   &1.7422e-04    & 3.9791  &1.7508e-04     & 3.9816  &1.7659e-04     &3.9816\\
			$16$   &1.1220e-05    &3.9568   &1.0975e-05     &3.9957   &1.1067e-05     &3.9961 \\
			$32$  &1.0055e-06    &3.4800   &6.8963e-07     &3.9923   &6.9217e-07     &3.9989 \\
		\end{tabular*}
		{\rule{\temptablewidth}{0.9pt}}
	\end{center}
\end{table}

\begin{example}\label{ex2}
We then consider the original model \eqref{eq1} with nonhomogeneous boundary conditions
\begin{equation*}
\begin{array}{l} \vspace{2mm}
\displaystyle {\frac{\partial^\alpha C}{\partial \zeta^\alpha}}+\frac{1}{2}{\varrho^{2}}S^{2}\frac{\partial^{2}C}{\partial S^{2}}+(r-D)S\frac{\partial C}{\partial S}-rC=0,\\
C(S,T)=(\ln S)^3+(\ln S)^2+1,\\
C(0,\zeta)=(T-\zeta+1)^2, \quad C(1,\zeta)=3(T-\zeta+1)^2.
\end{array}
\end{equation*}
Here the parameters are set as $T=1$, $(S_l,S_r)=(0,1)$, $\varrho=1$, $r=1$ and $D=0$.
\end{example}

To solve the Example \ref{ex2}, we rewrite it into the form of \eqref{GoverningEq} in order to apply the proposed scheme \eqref{sc2}.  It can be known by calculation that $a=0.5$, $b=0.5$, $c=0.05$,
$f(x,t)=(2b-c-2cx)(t+1)^2-(4x+2)\left( \frac{t^{1-\alpha}}{\Gamma(2-\alpha)}+\frac{t^{2-\alpha}}{\Gamma(3-\alpha)}\right)$ and $\varphi(x)=x^3+x^2-2x$.

The numerical results of Example 2 are listed in Tables \ref{table3} and \ref{table4}. Since there is no exact solution for this example, we take the approximate errors ${\tilde E}_2(M,N)=\|u_M^{\tilde{N}}-u_M^N\|$ and ${\hat E}_2(M,N)=\|u_{\hat{M}}^{N}-u_M^N\|$, where $u_M^N$ is the numerical solution with mesh nodes $N$,$M$, and $u_M^{\tilde{N}}$ and $u_{\hat{M}}^{N}$ are the numerical solutions with relative dense meshes ($\tilde{N}=1024$ and $\hat{M}=1024$). 
The temporal and spatial convergence rates are calculated respectively by 
$$Rate_\tau=\log_2\left[ \dfrac{{\tilde E}_2(M,N)}{{\tilde E}_2(M,2N)}\right] \quad\mbox{and}\quad Rate_h=\log_2\left[ \dfrac{{\hat E}_2(M,N)}{{\hat E}_2(2M,N)}\right].$$

While solving the Example 2, we still choose the graded mesh $t_k=T(k/N)^\gamma$ with the grading parameter $\gamma=2/\alpha$.  The numerical results displayed in Tables \ref{table3} and \ref{table4} demonstrate that our proposed method works very well with the temporal second-order and spatial fourth-order convergence accuracy for the general time-fractional Black-Scholes equation.   

\begin{table}[hbt!]
 \begin{center}
 \caption{Numerical temporal accuracy in temporal direction with $M=1000$ and different $\alpha$.}\label{table3}
 \renewcommand{\arraystretch}{1.0}
 \def\temptablewidth{0.9\textwidth}
 {\rule{\temptablewidth}{0.9pt}}
 \begin{tabular*}{\temptablewidth}{@{\extracolsep{\fill}}ccccc}
$N$ &\multicolumn{2}{c}{$\alpha=0.7$}&\multicolumn{2}{c}{$\alpha=0.9$}\\
 \cline{2-3}\cline{4-5}
			&${\tilde E}_2(M,N)$ &$Rate_\tau$  &${\tilde E}_2(M,N)$  &$Rate_\tau$  \\\hline
       $4$   &2.4570e-02    & $\ast$  &1.7242e-02     & $\ast$  \\
       $8$   &7.0122e-03    &1.8089   &4.4057e-03     &1.9685  \\
       $16$  &1.8262e-03    &1.9411   &1.1134e-03     &1.9844   \\
       $32$  &4.4687e-04    &2.0309   &2.7911e-04     &1.9961   \\
       $64$  &9.3175e-05    &2.2618   &6.9612e-05     &2.0034   \\
\end{tabular*}
{\rule{\temptablewidth}{0.9pt}}
\end{center}
\end{table}

\begin{table}[hbt!]
	\begin{center}
		\caption{Numerical spatial accuracy in spatial direction with $N=2000$ and different $\alpha$.}\label{table4}
		\renewcommand{\arraystretch}{1.0}
		\def\temptablewidth{0.9\textwidth}
		{\rule{\temptablewidth}{0.9pt}}
		\begin{tabular*}{\temptablewidth}{@{\extracolsep{\fill}}ccccc}
			$M$&\multicolumn{2}{c}{$\alpha=0.7$}
			&\multicolumn{2}{c}{$\alpha=0.9$}\\
			\cline{2-3}\cline{4-5}
			&${\hat E}_2(M,N)$ &$Rate_h$  &${\hat E}_2(M,N)$  &$Rate_h$  \\\hline
			$4$   &3.6513e-04    & $\ast$  &3.3062e-04     & $\ast$  \\
			$8$   &2.3131e-05    & 3.9805  &2.0924e-05     & 3.9819  \\
			$16$   &1.4498e-06    &3.9959   &1.3112e-06     &3.9963   \\
			$32$  &9.0651e-08    &3.9994   &8.1957e-08     &3.9999   \\
			$64$  &5.6443e-09    &4.0055   &5.0804e-09     &4.0118   \\
		\end{tabular*}
		{\rule{\temptablewidth}{0.9pt}}
	\end{center}
\end{table}

\section{Conclusion}\label{conclusion}

We proposed a high-order and nonuniform finite difference method for solving the time-fractional Black-Scholes equation. The numerical method is constructed by combining the fast nonuniform Alikhanov formula and a spatial fourth-order average approximation. The unconditional stability and convergence of second-order in time and fourth-order in space are rigorously derived by energy method. Numerical examples are included and the results indicated that the proposed numerical method works very accurately.


\begin{thebibliography}{99}

\bibitem{Black and Scholes}
F. Black, M.S. Scholes, The pricing of options and corporate liabilities, J. Polit. Econ., 81 (1973), 637--654.

\bibitem{Castillo-Negrete}
A. Cartea, D. del Castillo-Negrete, Fractional diffusion models of option prices in markets with jumps, Physica A., 374 (2007), 749--763.

\bibitem{Cen-CMA2018}
Z. Cen, J. Huang, A. Xu, A. Le, Numerical approximation of a time-fractional Black-Scholes equation, Comput. Math. Appl., 75 (2018), 2874--2887.

\bibitem{ChenStynesJSC2019}
H. Chen,  M. Stynes, Error analysis of a second-order method on fitted meshes for a time-fractional diffusion problem, J. Sci. Comput., 79 (2019), 624--647.

\bibitem{Chen}
W. Chen, X. Xu, S. P. Zhu, Analytically pricing double barrier options based on a time-fractional Black-Scholes equation, Comput. Math. Appl., 69 (2015), 1407--1419.

\bibitem{Fast-evaluation}
S. Jiang, J. Zhang, Z. Qian, and Z. Zhang, Fast evaluation of the Caputo fractional derivative and its applications to fractional diffusion equations, Comm. Comput. Phys., 21 (2017), 650-678.

\bibitem{Jin-IMA2016}
B. Jin, R. Lazarov, Z. Zhou,  An analysis of the L1 scheme for the subdiffusion equation with nonsmooth data, IMA J. Numer. Anal.,  33 (2016), 197--221. 

\bibitem{Jumarie1}
G. Jumarie, Stock exchange fractional dynamics defined as fractional exponential growth driven by (usual) Gaussian white noise. Application to fractional Black-Scholes equations, Insurance Math. Econom., 42 (2008), 271--287.

\bibitem{Jumarie2}
G. Jumarie, Derivation and solutions of some fractional Black-Scholes equations in coarse-grained space and time. Application to Merton's optimal portfolio, Comput. Math. Appl., 59 (2010), 1142--1164.

\bibitem{KoptevaMC2019}
N. Kopteva, Error analysis of the L1 method on graded and uniform meshes for a fractional-derivative problem in two and three dimensions, Math. Comput., 88 (2019), 2135--2155.

\bibitem{Laub}
A. J. Laub, Matrix Analysis for Scientists and Engineers, SIAM, Philadelphia, 2005.

\bibitem{Liang1}
J. R. Liang, J. Wang, W. J. Zhang, W. Y. Qiu, F. Y. Ren, Option pricing of a bi-fractional Black-Merton-Scholes model with the Hurst exponent H in [1/2, 1], Appl. Math. Lett., 23 (2010), 859--863.

\bibitem{Liang2}
J. R. Liang, J. Wang, W. J. Zhang, W. Y. Qiu, F. Y. Ren, The solution to a bi-fractional Black-Scholes-Merton differential equation, Int. J. Pure Appl. Math., 58 (2010), 99--112.

\bibitem{Sharperror}
H. L. Liao, D. Li, J. Zhang, Sharp error estimate of nonuniform L1 formula for linear reaction-subdiffusion equations, SIAM J. Numer. Anal., 56 (2018), 1112--1133.

\bibitem{Adiscrete}
H. L. Liao, W. McLean, J. Zhang, A discrete Gr$\ddot{o}$nwall inequality with application to numerical schemes for fractional reaction-subdiffusion problems, SIAM J. Numer. Anal., 57 (2019), 218--237.

\bibitem{Liao-2order}
H. L. Liao, W. McLean, J. Zhang, A second-order scheme with nonuniform time steps for a linear reaction-subdiffusion problem, Commun. Comput. Phys., 30 (2021), 567--601.

\bibitem{second-fast}
H. L. Liao, T. Tang, T. Zhou, A second-order and nonuniform time-stepping maximum-principle preserving scheme for time-fractional Allen-Cahn equations, J. Comput. Phys., 414 (2020), 109473.

\bibitem{LiaoYanZhang2018}
H. L. Liao, Y. Yan,  J. Zhang, Unconditional convergence of a two-level linearized fast algorithm for semilinear subdiffusion equations, J. Sci. Comput., 80 (2019), 1--25.

\bibitem{LyuVongANM2020}
P. Lyu, Y. Liang, Z. Wang, A fast linearized finite difference method for the nonlinear multi-term time-fractional wave equation, Appl. Numer. Math., 151 (2020), 448--471.

\bibitem{LyuVong2020NA}
P.  Lyu, S. Vong, A fast linearized numerical method for nonlinear time-fractional diffusion equations, Numer. Algorithms, 87 (2021), 381--408.

\bibitem{LyuVong2020diffu-wave}
P.  Lyu, S. Vong, A symmetric fractional-order reduction method for direct nonuniform approximations of semilinear diffusion-wave equations, submitted. arXiv:2101.09678v3 [math.NA]

\bibitem{LyuVong2021wave-variable}
P.  Lyu, S. Vong, Second-order and nonuniform time-stepping schemes for time fractional evolution equations with time-space dependent coefficients, J. Sci. Comput., accepted. Also available on: arXiv:2102.09396v3 [math.NA]

\bibitem{Roul-AML}
P. Roul, A high accuracy numerical method and its convergence for time-fractional Black-Scholes equation governing European options, Appl. Numer. Math., 151 (2020), 472--493.

\bibitem{Staelen-CMA}
R. Staelen, A. Hendy, Numerically pricing double barrier options in a time-fractional Black-Scholes model, Comput. Math. Appl., 74 (2017), 1166--1175.

\bibitem{Stynes-SIAM2017}
M. Stynes, E. O'Riordan, J. L. Gracia, Error analysis of a finite difference method on graded meshes for a time-fractional diffusion equation, SIAM J. Numer. Anal., 55 (2017), 1057--1079.

\bibitem{WangANM2021}
Z. Wang, D. Cen, Y. Mo, Sharp error estimate of a compact L1-ADI scheme for the two-dimensional time-fractional integro-differential equation with singular kernels, Appl. Numer. Math., 159 (2021), 190--203.

\bibitem{Wyss}
W. Wyss, The fractional Black-Scholes equations, Fract. Calc. Appl. Anal., 3 (2000), 51--61.

\bibitem{Yan-fast}
Y. Yan, Z. Z. Sun, J. Zhang, Fast evaluation of the Caputo fractional derivative and its applications to fractional diffusion equations: A Second-order Scheme, Commun. Comput. Phys., 22 (2017), 1028--1048.

\bibitem{Zhang-Liu}
H. Zhang, F. Liu, I. Turner, Q. Yang, Numerical solution of the time fractional Black-Scholes model governing European options, Comput. Math. Appl., 71 (2016), 1772--1783.

\end{thebibliography}
\end{document}